\def\TeXRoot{preamble}
\begin{document}
\maketitle
\begin{abstract}
  In the present paper, we prove that a topological space admits a functorial Lindel\"ofification if and only if its realcompactification is Lindel\"of.
  To investigate the functorial Lindel\"ofifiability of a topological space, for each topological property \(\sfP\), we introduce the notion of ``\textbf{functorial \(\sfP\)-ification}'' and give an explicit construction of the functorial \(\sfP\)-ification.
  Moreover, for a discrete space \(X\), we discuss the functorial \(|X|\)-Lindel\"ofifiability of \(X\) and study relationships with properties of the cardinal \(|X|\).
  Finally, we apply our results concerning functorial \(\kappa\)-Lindel\"ofifiability (for some cardinal \(\kappa\)) to the space of ordinals and construct several functorial \(\kappa\)-Lindel\"ofifiable spaces.
\end{abstract}
\section*{Introduction}

Throughout the present paper, we always suppose that \textbf{topological spaces are completely regular and Hausdorff}.
Let \(X\) be a topological space.
Our interest in the present paper is the property of the \textit{functorial} Lindel\"ofification of \(X\), where the functorial Lindel\"ofification of \(X\) is an extension space \(X\to X'\) such that for any continuous map \(f: X\to Y\), if \(Y\) is Lindel\"of, then \(f\) can be extended uniquely to a continuous map \(X'\to Y\).
Although the Stone-\v{C}ech compactification always exists for any completely regular Hausdorff space of \(X\), the functorial Lindel\"ofification may not exist.
To investigate the functorial Lindel\"ofifiability of a topological space, for each topological property \(\sfP\), we introduce the notion of ``functorial \(\sfP\)-ification'' and study the structure of the functorial \(\sfP\)-ification of a topological space.

In \cite{AHST}, F. Azarpanah, A. A. Hesari, A. R. Sarehi, and A. Taherifar constructed several Lindel\"of-like extension spaces of \(X\) in the Stone-\v{C}ech compactification \(\beta X\).
They, in detail, studied their extension spaces from the point of view of the relationship between algebraic properties of the rings of continuous (resp. bounded continuous) functions \(C(X)\) (resp. \(C^*(X)\)) on \(X\) and topological properties of \(X\) or \(\beta X\).
However, their Lindel\"of-like extension spaces do not have a suitable functoriality.
Therefore, by contrast, in the present paper, we discuss what properties can be concluded from the abstract functorial Lindel\"ofifiability.
In particular, our interest is the existence of functorial \(\sfP\)-ification and topological characterization of the functorial \(\sfP\)-ifiability.


\begin{acknowledgement*}
  The author would like to thank significantly K. Takahashi.
  The present investigation started with his small question about Lindel\"ofification.
  The author would also like to thank A. Yamashita for helpful conversations on perfect mappings (at Tsubakiya coffee, Roppongi) and Y. Ishiki for helpful advice on properties of locally compact \(\sigma\)-compact spaces.
\end{acknowledgement*}

\begin{notation*}
  We shall write \(\Top\) for the category of \textbf{completely regular Hausdorff} topological spaces and continuous functions.
  We shall write \(\R\) for the topological field of real numbers.

  Let \(X\) be a set.
  We shall write \(|X|\) for the cardinality of \(X\).
  We shall write \(2^X\) for the power set of \(X\).
  For any family of sets \(\mcF\), we shall write \(\bigcup \mcF\) for the union of \(\mcF\).

  Let \(X\) be a topological space.
  We shall write \(C(X)\) (resp. \(C^*(X)\)) for the rings of continuous (resp. bounded continuous) functions on \(X\).
  We shall write \(\beta X\) for the Stone-\v{C}ech compactification of \(X\).
  Then, the natural restriction morphism of rings \(C(\beta X)\xto{(-)|_X} C(X)\) induces an isomorphism of rings \(C(\beta X)\xto[\sim]{(-)|_X} C^*(X)\).
\end{notation*}
\Section[Functorial P-ification]{Functorial \(\sfP\)-ification}

Let \(\sfP\subset \Top\) be a full subcategory.
In this section, we study the fundamental properties of the \textit{\Pify} and construct it as a subspace of the Stone-\v{C}ech compactification (cf. \cref{def: func P-ify construct}).

Let us start to define the \Pify, which is an analogue of the Stone-\v Cech compactification.

\begin{definition}
  Let \(\sfP\subset\Top\) be a full subcategory.
  \begin{enumerate}
    \item
    Let \(X\) be a topological space.
    We shall say that \(X\) \textbf{admits a \Pify} \(i_X: X\to \nu_{\sfP}X\) if \(\nu_{\sfP}X\) belongs to \(\sfP\), and, moreover, for any object \(Y\in \sfP\) and any continuous map \(f:X\to Y\), there exists a unique continuous map \(\tilde{f}:\nu_{\sfP}X\to Y\) such that \(f = \tilde{f}\circ i\).
    If \(X\) admits a \Pify\ \(i_X:X\to \nu_{\sfP}X\), then we shall say that \(X\) is \textbf{\Piable}.
    \item
    Let \(f:X\to Y\) be a continuous map between topological spaces that admit \Pifys.
    Write \(\nu_{\sfP}f:\nu_{\sfP}X\to \nu_{\sfP}Y\) for the unique continuous map such that \(i_Y\circ f = \nu_{\sfP}f\circ i_X\).
    \item
    Write \(\Pfifiable\subset \Top\) for the full subcategory determined by the topological spaces that admits a \Pify.
  \end{enumerate}
\end{definition}

\begin{definition}
  We shall write \(\Cpt\subset \Top\) for the full subcategory of compact Hausdorff topological spaces (where we note that, by Urysohn's lemma, a compact Hausdorff topological space is completely regular).
\end{definition}

\begin{remark}
  If \(X\) is a topological space that belongs to \(\sfP\), then the identity morphism \(\id_X: X\to X = \nu_{\sfP}X\) satisfies property of the \Pify.
  Hence, it holds that \(\sfP\subset \Pfifiable\).
  Moreover, the assignment \(\nu_{\sfP}: \Pfifiable \to \sfP\), \(X\mapsto \nu_{\sfP}X\), \(f\mapsto \nu_{\sfP}f\) is a left adjoin functor of the inclusion functor \(\sfP \subset \Pfifiable\) such that \(\nu_{\sfP}|_{\sfP} = \id_{\sfP}\).
\end{remark}

\begin{remark}
  Assume that \(\sfP=\Cpt\subset \Top\).
  By the uniqueness of a left adjoint functor of the inclusion functor \(\sfP\subset \sfP^{\nu}\), then the \Pify\ functor \(\nu_{\Cpt}\) is isomorphic to the \SCCpt\ functor \(\beta:\Top\to \Cpt\).
\end{remark}

\begin{lemma}\label{lem: func Lind is a sub of SC cpt}
  Let \(\sfP\subset \Top\) be a full subcategory such that \(\Cpt\subset \sfP\) and \(X\) a topological space that admits a \Pify\ \(i_X:X\to \nu_{\sfP}X\).
  Then the continuous map \(\beta i_X: \beta X\to \beta\nu_{\sfP}X\) is a homeomorphism.
  In particular, the continuous map \(i_X: X \to \nu_{\sfP}X\) may be regarded as the inclusion map \(X\subset \nu_{\sfP}\) between subspaces of \(\beta X\).
\end{lemma}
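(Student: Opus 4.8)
The plan is to use the universal property that characterizes the \SCCpt: writing \(\eta_X\colon X\to\beta X\) for the canonical map, the pair \((\beta X,\eta_X)\) is the universal arrow from \(X\) into the full subcategory \(\Cpt\), i.e.\ every continuous map from \(X\) into a compact Hausdorff space factors uniquely through \(\eta_X\), and this property pins down \((\beta X,\eta_X)\) up to a unique homeomorphism compatible with the map from \(X\). Thus it suffices to prove that the composite \(\eta_{\nu_{\sfP}X}\circ i_X\colon X\to\beta\nu_{\sfP}X\) has this same universal property; for then there is a unique homeomorphism \(\varphi\colon\beta X\to\beta\nu_{\sfP}X\) with \(\varphi\circ\eta_X=\eta_{\nu_{\sfP}X}\circ i_X\), and since \(\beta i_X\) is, by functoriality of \(\beta\), the unique continuous map satisfying \(\beta i_X\circ\eta_X=\eta_{\nu_{\sfP}X}\circ i_X\), we get \(\beta i_X=\varphi\). (Conceptually this is the statement that \(\beta|_{\sfP^{\nu}}\) and the composite \(\beta|_{\sfP}\circ\nu_{\sfP}\), both left adjoint to \(\Cpt\subset\sfP^{\nu}\), agree.)

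To check the universal property, let \(K\) be a compact Hausdorff space and \(f\colon X\to K\) continuous. The hypothesis \(\Cpt\subset\sfP\) gives \(K\in\sfP\), so the universal property of the \Pify\ \(i_X\) provides a unique continuous \(\tilde f\colon\nu_{\sfP}X\to K\) with \(\tilde f\circ i_X=f\), and then the universal property of \(\beta\nu_{\sfP}X\) applied to \(\tilde f\) provides a unique continuous \(\hat f\colon\beta\nu_{\sfP}X\to K\) with \(\hat f\circ\eta_{\nu_{\sfP}X}=\tilde f\); hence \(\hat f\circ(\eta_{\nu_{\sfP}X}\circ i_X)=f\). For uniqueness, suppose \(g_1,g_2\colon\beta\nu_{\sfP}X\to K\) satisfy \(g_j\circ\eta_{\nu_{\sfP}X}\circ i_X=f\). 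Then \(g_1\circ\eta_{\nu_{\sfP}X}\) and \(g_2\circ\eta_{\nu_{\sfP}X}\) are continuous maps \(\nu_{\sfP}X\to K\) with \(K\in\sfP\) that agree after precomposition with \(i_X\), so they coincide by the uniqueness clause in the universal property of \(i_X\); as \(\eta_{\nu_{\sfP}X}\) has dense image (its source \(\nu_{\sfP}X\) lies in \(\Top\), hence is completely regular Hausdorff) and \(K\) is Hausdorff, this forces \(g_1=g_2\).

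For the final clause, identify \(\beta X\) with \(\beta\nu_{\sfP}X\) via the homeomorphism \(\beta i_X\). The canonical maps \(\eta_X\colon X\to\beta X\) and \(\eta_{\nu_{\sfP}X}\colon\nu_{\sfP}X\to\beta\nu_{\sfP}X\) are embeddings, and the relation \(\beta i_X\circ\eta_X=\eta_{\nu_{\sfP}X}\circ i_X\) shows that, under the identification, the image of \(X\) in \(\beta X\) lies inside the image of \(\nu_{\sfP}X\) and that the resulting map \(X\to\nu_{\sfP}X\) is exactly \(i_X\); moreover \(i_X\) is itself an embedding, since \(\eta_{\nu_{\sfP}X}\circ i_X\) is (being \(\eta_X\) followed by a homeomorphism) and \(\eta_{\nu_{\sfP}X}\) is. One may also note that \(i_X(X)\) is dense in \(\nu_{\sfP}X\): any point of \(\nu_{\sfP}X\setminus\overline{i_X(X)}\) could be separated from \(\overline{i_X(X)}\) by a continuous \(h\colon\nu_{\sfP}X\to[0,1]\) with \(h\circ i_X\equiv 0\), and then \(h\) and the constant map \(0\) would be two factorizations through \(i_X\) of \(0\colon X\to[0,1]\) into \([0,1]\in\Cpt\subset\sfP\), contradicting uniqueness.

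I do not anticipate a genuine obstacle; the only point demanding care is to remain non-circular by invoking the hypothesis \(\Cpt\subset\sfP\) precisely where the target spaces (\(K\), and \([0,1]\)) must be recognized as objects of \(\sfP\), and to use the ``unique factorization'' form of the universal property of \(\beta\) rather than the ``dense embedding'' form, so that no separate check that \(\eta_{\nu_{\sfP}X}\circ i_X\) is a dense embedding is required for the homeomorphism assertion.
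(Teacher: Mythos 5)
Your proof is correct and is essentially the paper's argument in different packaging: where the paper uses \(\Cpt\subset\sfP\) to build an explicit inverse \(g\colon\beta\nu_{\sfP}X\to\beta X\) (first factoring \(j_X\) through \(i_X\), then extending over \(\beta\nu_{\sfP}X\)) and checks both composites are identities via the uniqueness clauses, you use the same inclusion to verify that \(\eta_{\nu_{\sfP}X}\circ i_X\) is itself a universal arrow from \(X\) into \(\Cpt\) and then invoke uniqueness of universal objects. The two factorization steps and the role of the hypothesis \(\Cpt\subset\sfP\) are identical, so this is the same proof.
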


\begin{proof}
  Write \(j_{(-)}: (-)\to \beta(-)\) for the natural embedding to the \SCCpt\ of \((-)\).
  Since \(\beta X\in \Cpt\subset \sfP\), it follows from the universality of the \Pify\ \(\nu_{\sfP}X\) that there exists a unique morphism \(j: \nu_{\sfP}X\to \beta X\) such that \(j_X = j\circ i_X\).
  By the universality of the \SCCpt\ of \(\nu_{\sfP}X\), there exists a morphism \(g:\beta\nu_{\sfP}X\to \beta X\) such that \(j=g\circ j_{\nu_{\sfP}X}\):
  \[\begin{tikzcd}
    {X}
      \ar[r, "{j_X}"] \ar[d, "{i_X}"'] &[1cm]
    {\beta X}
      \ar[d, "{\beta i_X}"'] \\
    {\nu_{\sfP} X}
      \ar[r, "{j_{\nu_{\sfP}X}}"] \ar[ru, "{j}"] &
    {\beta\nu_{\sfP}X}
      \ar[u, "g"', bend right=20pt].
  \end{tikzcd}\]
  By the functoriality of the operation \(\beta(-)\), it holds that \(\beta i_X \circ j_X = j_{\nu_{\sfP}X}\circ i_X\).
  Hence, it holds that
  \[g\circ \beta i_X \circ j_X = g\circ j_{\nu_{\sfP}X}\circ i_X = j\circ i_X = j_X.\]
  By the universality of the \SCCpt\ of \(X\), this implies that \(g\circ \beta i_X = \id_{\beta X}\).
  Moreover, since \(\beta i_X \circ j_X = j_{\nu_{\sfP}X}\circ i_X\), it holds that
  \((\beta i_X\circ j)\circ i_X = \beta i_X \circ j_X = j_{\nu_{\sfP}X} \circ i_X\).
  Since \(\beta\nu_{\sfP}X\in \Cpt\subset \sfP\), it follows from the universality of the \Pify\ of \(X\) that \(\beta i_X \circ j = j_{\nu_0X}\).
  Hence, it holds that
  \[\beta i_X \circ g \circ j_{\nu_{\sfP}X} = \beta i_X \circ j = j_{\nu_{\sfP}X}.\]
  By the universality of the \SCCpt\ of \(\nu_{\sfP}X\), this implies that \(\beta i_X \circ g = \id_{\beta\nu_{\sfP}X}\).
  Thus, in particular, the natural continuous map \(\beta i_X\) is a homeomorphism.
  This completes the proof of \cref{lem: func Lind is a sub of SC cpt}.
\end{proof}

Next, we construct explicitly an extension of \(X\) in \(\beta X\) that represents the \Pify.

\begin{definition}\label{def: func P-ify construct}
  Let \(\sfP\subset \Top\) be a full subcategory such that \(\Cpt\subset \sfP\) and \(X\) a topological space.
  We shall write
  \[
    \tilde{\nu}_{\sfP} X \dfn \bigcap
    \left\{
      \beta f^{-1}(Y)
      \,\middle|\,
      \text{\(f:X\to Y\) is a continuous map such that \(Y\in\sfP\)}
    \right\} \,\subset\, \beta X.
  \]
\end{definition}

\begin{lemma}\label{lem: P and Q ify}
  Let \(\sfP\subset \mathsf{Q}\subset \Top\) be full subcategories and \(X\) a topological space.
  Then, it holds that \(\tilde{\nu}_{\mathsf{Q}} X \subset \tilde{\nu}_{\sfP}X\).
\end{lemma}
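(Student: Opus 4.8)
The plan is to observe that enlarging the subcategory from \(\sfP\) to \(\mathsf{Q}\) can only enlarge the family of subsets of \(\beta X\) over which the intersection defining \(\tilde{\nu}_{(-)}X\) is taken, and then to invoke the elementary monotonicity of set-theoretic intersection under enlargement of the index family.

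In detail, I would set
\[
  \mcF_{\sfP} \dfn \left\{ \beta f^{-1}(Y) \,\middle|\, \text{\(f:X\to Y\) is a continuous map with \(Y\in\sfP\)} \right\},
\]
and likewise define \(\mcF_{\mathsf{Q}}\) by replacing \(\sfP\) with \(\mathsf{Q}\), so that by \cref{def: func P-ify construct} we have \(\tilde{\nu}_{\sfP}X = \bigcap \mcF_{\sfP}\) and \(\tilde{\nu}_{\mathsf{Q}}X = \bigcap \mcF_{\mathsf{Q}}\) as subsets of \(\beta X\). First I would check that \(\mcF_{\sfP}\subset \mcF_{\mathsf{Q}}\): since \(\sfP\subset\mathsf{Q}\) is a full subcategory, in particular \(\mathrm{Ob}(\sfP)\subseteq \mathrm{Ob}(\mathsf{Q})\), so every continuous map \(f:X\to Y\) with \(Y\in\sfP\) is in particular a continuous map to an object \(Y\in\mathsf{Q}\), whence \(\beta f^{-1}(Y)\in \mcF_{\mathsf{Q}}\). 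Then I would conclude: any point of \(\beta X\) that lies in every member of \(\mcF_{\mathsf{Q}}\) lies, a fortiori, in every member of the subfamily \(\mcF_{\sfP}\), so \(\bigcap \mcF_{\mathsf{Q}}\subset \bigcap \mcF_{\sfP}\), that is, \(\tilde{\nu}_{\mathsf{Q}}X\subset \tilde{\nu}_{\sfP}X\).

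I do not anticipate any genuine obstacle here; the statement is a direct instance of the monotonicity of intersection under enlargement of the index set. The only point worth spelling out is that it is precisely the inclusion of object classes \(\mathrm{Ob}(\sfP)\subseteq\mathrm{Ob}(\mathsf{Q})\) (a consequence of \(\sfP\subset\mathsf{Q}\) being a full subcategory) that makes the defining family grow rather than merely change, so that the inclusion goes in the asserted direction.
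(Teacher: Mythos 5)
Your proposal is correct and is exactly the paper's argument: the paper's proof simply states that the lemma ``follows immediately from the definition of \(\tilde{\nu}_{(-)}(?)\) and the inclusion relation \(\sfP\subset\mathsf{Q}\),'' and your write-up just spells out the details of that one-line observation (the defining family of sets grows, so the intersection shrinks). Nothing is missing.
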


\begin{proof}
  \Cref{lem: P and Q ify} follows immediately from the definition of \(\tilde{\nu}_{(-)}(?)\) and the inclusion relation \(\sfP\subset \mathsf{Q}\).
\end{proof}

\begin{lemma}\label{lem: nu X and naive}
  Let \(\sfP\subset \Top\) be a full subcategory such that \(\Cpt\subset \sfP\) and \(X\) a \Piable\ topological space.
  Then, it holds that \(\nu_{\sfP}X = \tilde{\nu}_{\sfP}X\).
\end{lemma}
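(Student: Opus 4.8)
The plan is to carry out the whole argument inside \(\beta X\). By \cref{lem: func Lind is a sub of SC cpt} we may regard \(i_X\) as an inclusion \(X\subset \nu_{\sfP}X\) of subspaces of \(\beta X\) and \(\beta i_X:\beta X\to \beta\nu_{\sfP}X\) as a homeomorphism; moreover, since \(X\) is dense in \(\beta X\), it is dense in the intermediate subspace \(\nu_{\sfP}X\) as well. Both \(\nu_{\sfP}X\) and \(\tilde{\nu}_{\sfP}X\) are then subspaces of \(\beta X\), so it makes sense to compare them, and I would prove the two inclusions separately, using repeatedly that continuous maps into a Hausdorff space that agree on a dense subset coincide.

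For the inclusion \(\tilde{\nu}_{\sfP}X\subset \nu_{\sfP}X\): the map \(i_X:X\to \nu_{\sfP}X\) is itself a continuous map into an object of \(\sfP\), so \(\beta i_X^{-1}(\nu_{\sfP}X)\) is one of the sets in the intersection defining \(\tilde{\nu}_{\sfP}X\), whence \(\tilde{\nu}_{\sfP}X\subset \beta i_X^{-1}(\nu_{\sfP}X)\). It then remains to check \(\beta i_X^{-1}(\nu_{\sfP}X)=\nu_{\sfP}X\) under the identifications above. For this I would observe that the restriction \(\beta i_X|_{\nu_{\sfP}X}\) and the canonical embedding \(\nu_{\sfP}X\hookrightarrow \beta\nu_{\sfP}X\) are two continuous maps \(\nu_{\sfP}X\to \beta\nu_{\sfP}X\) that agree on the dense subset \(X\): indeed \(\beta i_X\) is by definition the continuous extension of the composite \(X\xto{i_X}\nu_{\sfP}X\hookrightarrow\beta\nu_{\sfP}X\), which on \(X\) is the same as the canonical embedding restricted to \(X\subset\nu_{\sfP}X\). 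Since \(\beta\nu_{\sfP}X\) is Hausdorff these two maps coincide, so the homeomorphism \(\beta i_X\) carries the subspace \(\nu_{\sfP}X\subset\beta X\) onto the subspace \(\nu_{\sfP}X\subset\beta\nu_{\sfP}X\); this is exactly the claimed equality of preimages.

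For the reverse inclusion \(\nu_{\sfP}X\subset \tilde{\nu}_{\sfP}X\): I would fix an arbitrary continuous map \(f:X\to Y\) with \(Y\in\sfP\) and show \(\beta f(\nu_{\sfP}X)\subset Y\), i.e.\ \(\nu_{\sfP}X\subset \beta f^{-1}(Y)\). Using the universal property of the \Pify, write \(f=\tilde f\circ i_X\) with \(\tilde f:\nu_{\sfP}X\to Y\) continuous. Then \(\beta f|_{\nu_{\sfP}X}\) and the composite \(\nu_{\sfP}X\xto{\tilde f}Y\hookrightarrow\beta Y\) are continuous maps \(\nu_{\sfP}X\to\beta Y\) agreeing on the dense subset \(X\), so they coincide (again by Hausdorffness of \(\beta Y\)); hence \(\beta f(\nu_{\sfP}X)=\tilde f(\nu_{\sfP}X)\subset Y\). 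Intersecting over all such \(f\) yields \(\nu_{\sfP}X\subset\tilde{\nu}_{\sfP}X\), and the two inclusions together give the claim.

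I expect the only delicate point to be the bookkeeping in the first inclusion: one must be careful about which embedding realizes \(\nu_{\sfP}X\) as a subspace of \(\beta X\) (the one produced in the proof of \cref{lem: func Lind is a sub of SC cpt}) and verify that \(\beta i_X\) respects it. Everything else is a routine dense-subset uniqueness argument, so no further obstacle is anticipated.
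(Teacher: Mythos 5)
Your proof is correct and follows essentially the same route as the paper: both inclusions are obtained exactly as in the paper's proof, using \cref{lem: func Lind is a sub of SC cpt} to identify \(\nu_{\sfP}X\) with a subspace of \(\beta X\) for the inclusion \(\tilde{\nu}_{\sfP}X\subset \nu_{\sfP}X\), and the universal property together with the identification \(\beta f=\beta\tilde f\) for the reverse inclusion. The extra dense-subset bookkeeping you supply is exactly what the paper leaves implicit, so there is nothing to add.
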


\begin{proof}
  If we write \(i_X:X\inj \nu_{\sfP}X\) for the inclusion map, then, since \(\nu_{\sfP}X\in \sfP\), it follows from  \cref{lem: func Lind is a sub of SC cpt} that \(\tilde{\nu}_{\sfP}X\subset \beta i_X^{-1}(\nu_{\sfP}X) = \nu_{\sfP}X\).

  Next, we prove that \(\nu_{\sfP}X\subset \tilde{\nu}_{\sfP}X\).
  Let \(f:X\to Y\) be a continuous map such that \(Y\in \sfP\).
  Then, by the universality of the \Pify, there exists a unique morphism \(\tilde{f}:\nu_{\sfP}X\to Y\) such that \(\tilde{f}|_X = f\).
  By \cref{lem: func Lind is a sub of SC cpt}, it holds that \(\beta f = \beta \tilde{f}\).
  Thus, it holds that \(\nu_{\sfP}X\subset \beta\tilde{f}^{-1}(Y) = f^{-1}(Y)\).
  By allowing \(Y\) to vary over \(\sfP\), we conclude that \(\nu_{\sfP}X\subset \tilde{\nu}_{\sfP} X\).
  This completes the proof of \cref{lem: nu X and naive}.
\end{proof}


\begin{lemma}\label{lem: nu X in SC Lind}
  Let \(\sfP\subset \Top\) be a full subcategory such that \(\Cpt\subset \sfP\) and \(X\) a topological space.
  Then, \(X\) is \Piable\ if and only if \(\tilde{\nu}_{\sfP}X\) belongs to \(\sfP\).
\end{lemma}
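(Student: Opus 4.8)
The plan is to split into the two implications, the forward one being an immediate consequence of the already-established identification \(\nu_{\sfP}X=\tilde{\nu}_{\sfP}X\), and the reverse one amounting to checking that the subspace inclusion \(X\subset\tilde{\nu}_{\sfP}X\) of \cref{def: func P-ify construct} witnesses the universal property. So first, if \(X\) is \Piable, then \(\nu_{\sfP}X\) belongs to \(\sfP\) by definition of a \Pify, and \cref{lem: nu X and naive} gives \(\tilde{\nu}_{\sfP}X=\nu_{\sfP}X\); hence \(\tilde{\nu}_{\sfP}X\in\sfP\).

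For the converse, assume \(\tilde{\nu}_{\sfP}X\in\sfP\) and write \(j_{(-)}\colon(-)\to\beta(-)\) for the canonical embedding. The first step is to note that \(j_X(X)\subset\tilde{\nu}_{\sfP}X\): for every continuous map \(f\colon X\to Y\) with \(Y\in\sfP\) the functoriality of \(\beta(-)\) gives \(\beta f\circ j_X=j_Y\circ f\), so \(j_X(X)\subset\beta f^{-1}(j_Y(Y))\), and intersecting over all such \(f\) yields the claim. Since \(X\to\beta X\) is a topological embedding, this produces a morphism \(i_X\colon X\to\tilde{\nu}_{\sfP}X\) in \(\Top\) which is the inclusion of subspaces of \(\beta X\); moreover, because \(X\) is dense in \(\beta X\) and \(\tilde{\nu}_{\sfP}X\) is a subspace of \(\beta X\) containing \(X\), \(X\) is dense in \(\tilde{\nu}_{\sfP}X\).

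Next I would verify the universal property. Given \(Y\in\sfP\) and a continuous map \(g\colon X\to Y\), applying the definition of \(\tilde{\nu}_{\sfP}X\) to the map \(g\) itself gives \(\tilde{\nu}_{\sfP}X\subset\beta g^{-1}(j_Y(Y))\); since \(j_Y\) is an embedding, \(\beta g\) restricts and corestricts to a continuous map \(\tilde{g}\colon\tilde{\nu}_{\sfP}X\to Y\), and \(\tilde{g}\circ i_X=g\) because \(\beta g\circ j_X=j_Y\circ g\). For uniqueness, if \(\tilde{g}_1,\tilde{g}_2\colon\tilde{\nu}_{\sfP}X\to Y\) both restrict to \(g\) on \(X\), then they agree on the dense subset \(X\) and \(Y\) is Hausdorff, so \(\tilde{g}_1=\tilde{g}_2\). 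Together with \(\tilde{\nu}_{\sfP}X\in\sfP\), this shows \(i_X\colon X\to\tilde{\nu}_{\sfP}X\) is a \Pify, so \(X\) is \Piable.

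I do not anticipate a real obstacle here; the only point requiring care is the bookkeeping with the embeddings \(j_X\) and \(j_Y\) — specifically that \(\beta g^{-1}(Y)\) in \cref{def: func P-ify construct} means \(\beta g^{-1}(j_Y(Y))\), and that, since \(Y\) is completely regular Hausdorff and hence \(j_Y\) is a topological embedding, the corestriction of \(\beta g|_{\tilde{\nu}_{\sfP}X}\) to \(Y\) is genuinely continuous into \(Y\) and extends \(g\).
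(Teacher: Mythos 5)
Your proof is correct and follows the same route as the paper: the forward direction is exactly the appeal to \cref{lem: nu X and naive}, and the converse is the direct verification of the universal property of \(\tilde{\nu}_{\sfP}X\) (restriction of \(\beta g\), plus uniqueness from density of \(X\) and Hausdorffness of \(Y\)), which the paper states as "one can verify immediately" and leaves to the reader. You have simply supplied the details the paper omits.
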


\begin{proof}
  If \(X\) is \Piable, then, by \cref{lem: nu X and naive}, it holds that \(\tilde{\nu}_{\sfP}X = \nu_{\sfP}X \in \sfP\).
  Conversely, if \(\tilde{\nu}_{\sfP}X \in \sfP\), then one can verify immediately that \(\tilde{\nu}_{\sfP}X\) satisfies the required universality of the \Pify\ of \(X\).
  This completes the proof of \cref{lem: nu X in SC Lind}.
\end{proof}
\Section[Cech completion]{Almost Compactification}

In this section, we consider the case where any \Piable\ space belongs to \(\sfP\).

\begin{definition}
  Let \(X\) be a topological space.
  We shall say that \(X\) is \textbf{almost compact} if \(\beta X \setminus X\) is of cardinality at most one.
  Write \(\AlmCpt\subset \Top\) for the full subcategory determined by almost compact spaces.
\end{definition}

\begin{remark}\label{rmk: beta minus one pt is alm cpt}
  For any topological space \(X\) and any subspace \(X\subset Y\subset \beta X\), the natural morphism \(\beta X\to \beta Y\) is a homeomorphism.
  Indeed, any bounded continuous function \(f:Y\to [a,b]\subset \R\) can be extended uniquely to a continuous function \(\beta f|_Y:\beta X \to [a,b]\subset \R\).
  Thus, in particular, for any \(p\in \beta X\setminus X\), \(\beta X\setminus \{p\}\) is almost compact.
\end{remark}

Note that for any topological space \(X\), it follows immediately that
\[X = \bigcap_{p\in \beta X \setminus X}(\beta X \setminus \{p\}).\]
By \cref{rmk: beta minus one pt is alm cpt}, the equality displayed above leads us to the following proposition:

\begin{proposition}\label{prop: cech cplt sfP}
  Let \(\sfP\subset \Top\) be a full subcategory such that \(\AlmCpt\subset \sfP\) and \(X\) be a topological space.
  Then, \(X\) is \Piable\ if and only if \(X\) belongs to \(\sfP\).
\end{proposition}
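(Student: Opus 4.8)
The proposition is signposted by the displayed identity \(X = \bigcap_{p \in \beta X \setminus X}(\beta X \setminus \{p\})\) placed just above it, and the plan is to turn that identity into a computation of the explicit model \(\tilde{\nu}_{\sfP} X\) of \cref{def: func P-ify construct}. Concretely, I will show that \(\tilde{\nu}_{\sfP} X = X\) (as subsets of \(\beta X\)) whenever \(\AlmCpt \subset \sfP\), and then quote \cref{lem: nu X in SC Lind}. Note first that \(\Cpt \subset \AlmCpt\), since a compact Hausdorff space \(Z\) has \(\beta Z = Z\) and hence \(\beta Z \setminus Z = \emptyset\); thus \(\Cpt \subset \AlmCpt \subset \sfP\), so the lemmas of the previous section — in particular \cref{lem: nu X in SC Lind}, which states that \(X\) is \Piable\ if and only if \(\tilde{\nu}_{\sfP} X \in \sfP\) — apply to \(\sfP\). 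Granting \(\tilde{\nu}_{\sfP} X = X\), this lemma gives exactly what is wanted, namely that \(X\) is \Piable\ if and only if \(X = \tilde{\nu}_{\sfP} X\) belongs to \(\sfP\). (The ``if'' half also follows from the general inclusion \(\sfP \subset \Pfifiable\), but this unified argument is cleaner.)

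It remains to establish \(\tilde{\nu}_{\sfP} X = X\). The inclusion \(X \subset \tilde{\nu}_{\sfP} X\) is automatic: for every continuous map \(f : X \to Y\) with \(Y \in \sfP\), the restriction \(\beta f|_X = f\) sends \(X\) into \(Y\), so \(X \subset \beta f^{-1}(Y)\), and intersecting over all such \(f\) keeps \(X\). For the reverse inclusion I will feed into the intersection defining \(\tilde{\nu}_{\sfP} X\) the ``point-deletion'' test maps. Fix \(p \in \beta X \setminus X\) and put \(Y_p \dfn \beta X \setminus \{p\}\); as a subspace of the compact Hausdorff space \(\beta X\) it lies in \(\Top\), and since \(X \subset Y_p \subset \beta X\), \cref{rmk: beta minus one pt is alm cpt} shows \(Y_p\) is almost compact, hence \(Y_p \in \AlmCpt \subset \sfP\). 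Let \(\iota_p : X \to Y_p\) be the corestriction of the inclusion \(X \subset \beta X\) (legitimate because \(p \notin X\)), a continuous map in \(\Top\). By the same remark the natural morphism \(\beta X \to \beta Y_p\) is a homeomorphism; identifying \(\beta Y_p\) with \(\beta X\) along it, the canonical embedding \(Y_p \hookrightarrow \beta Y_p\) becomes the subspace inclusion \(\beta X \setminus \{p\} \subset \beta X\), and \(\beta \iota_p\) becomes \(\id_{\beta X}\) — both \(\beta\iota_p\) (transported along the identification) and \(\id_{\beta X}\) restrict on the dense subset \(X\) to the inclusion \(X \subset \beta X\), so the uniqueness clause of the universal property of \(\beta X\) forces them to agree. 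Hence \(\beta \iota_p^{-1}(Y_p) = Y_p = \beta X \setminus \{p\}\), and this set occurs among those intersected in \cref{def: func P-ify construct}. Letting \(p\) vary,
\[
  \tilde{\nu}_{\sfP} X \;\subset\; \bigcap_{p \in \beta X \setminus X} \beta \iota_p^{-1}(Y_p) \;=\; \bigcap_{p \in \beta X \setminus X}(\beta X \setminus \{p\}) \;=\; X,
\]
which completes the proof of \(\tilde{\nu}_{\sfP} X = X\) and hence of the proposition.

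The one step that is not purely formal is the identification \(\beta Y_p \cong \beta X\) under which \(\beta \iota_p\) corresponds to the identity; but this is essentially the content of \cref{rmk: beta minus one pt is alm cpt} together with density of \(X\) in \(\beta X\) and the uniqueness of Stone-\v{C}ech extensions, so I expect it to cause no real difficulty. Everything else is a direct unwinding of \cref{def: func P-ify construct} and \cref{lem: nu X in SC Lind}.
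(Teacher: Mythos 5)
Your proof is correct and follows essentially the same route as the paper's: both establish \(X \subset \tilde{\nu}_{\sfP}X \subset \bigcap_{p\in\beta X\setminus X}(\beta X\setminus\{p\}) = X\) using \cref{rmk: beta minus one pt is alm cpt} and then conclude via \cref{lem: nu X in SC Lind}. You merely spell out what the paper leaves implicit — the point-deletion test maps \(\iota_p : X \to \beta X\setminus\{p\}\) feeding into \cref{def: func P-ify construct}, and the observation that \(\Cpt\subset\AlmCpt\) is needed for the hypotheses of the section's lemmas to hold — both of which are worthwhile clarifications.
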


\begin{proof}
  Since \(\AlmCpt \subset \sfP\), it follows from \cref{rmk: beta minus one pt is alm cpt} and the definition of \(\tilde{\nu}_{\sfP}\) that
  \[
    X\subset \tilde{\nu}_{\sfP}X \subset \bigcap_{p\in \beta X \setminus X}(\beta X \setminus \{p\}) = X.
  \]
  Hence, it holds that \(X = \tilde{\nu}_{\sfP}X\).
  Thus, \cref{prop: cech cplt sfP} follows immediately from \cref{lem: nu X in SC Lind}.
\end{proof}

\begin{remark}\label{rmk: pseudocompact realcompact}
  One can easily prove that an almost compact topological space is pseudocompact (cf. \cref{def: pseudocompact}) and locally compact.
  Hence, an almost compact space is \v{C}ech complete.
  In particular, by \cref{prop: cech cplt sfP}, if \(\sfP\) is equal to the full subcategory determined by these topological properties, then \(\sfP = \sfP^{\nu}\).
\end{remark}
\Section[real compactification]{Realcompactification}

In the present section, we apply the theory developed in \cref{Functorial P-ification} to the case where the category \(\sfP\subset \Top\) consists of realcompact topological spaces and characterize functorial Lindel\"ofifiability as a topological property of realcompactification (cf. \cref{cor: func lind}).

\begin{definition} \
  \begin{enumerate}
    \item We shall write \(\RCpt\subset \Top\) for the full subcategory determined by the realcompact topological spaces (cf. \cref{remark: real compact property}).
    \item For any topological space \(X\), we shall write \(\up X\) for the realcompactification of \(X\) (cf. \cref{remark: real compact property} or \cite[Section 8.4]{GJ-rings-of-conti}).
  \end{enumerate}
\end{definition}

In the present section, we shall mainly be concerned with the situation that \(\R\in \sfP\subset \RCpt\).
Let us recall some fundamental relationships between realcompactifications and rings of continuous functions.

\begin{remark}
  \label{remark: real compact property}
  Let \(X\) be a topological space.
  Recall that \(X\) is \textbf{realcompact} if there exist a cardinal \(\kappa\) and a closed embedding \(X\inj \R^{\kappa}\).
  A point \(x\in\beta X\) of the \SCCpt\ of \(X\) is \textbf{real} if any continuous map \(X\to \R\) can be extended to a continuous map \(X\cup\{x\}\to \R\).
  Here, we note that any point of \(X\) is a real point.
  Then, it is a well-known fact that \(X\) is realcompact if and only if any real point \(x\in \beta X\) belongs to \(X\subset \beta X\).

  Note that the realcompactification \(\up X\) is defined as the set of real points of \(X\) in \(\beta X\).
  By the characterization of realcompactness mentioned as above, \(\up X\) is automatically realcompact.
  Hence, it holds that
  \begin{equation}
    \label{ass: real cpt ext remark: real compact property}
    \up X = \bigcap_{f\in C(X)} \beta f^{-1}(\R).
    \tag{\(\dagger\)}
  \end{equation}
  In particular, the realcompactification \(\up X\) is the minimal realcompact subspace of \(\beta X\).
  Thus, we conclude from these discussion that the natural restriction morphism \((-)|_X : C(\up X) \to C(X)\) is an isomorphism of rings.

  A real point \(x\in \beta X\) of \(X\) can be characterized as a point \(x\in \beta X\) such that the residue field of the ring \(C(X)\) at the maximal ideal determined by \(x\in \beta X\) is isomorphic to \(\R\).
  Hence, the topological space \(\up X\) may be reconstructed as the set of maximal ideals of \(C(X)\) whose residue field is isomorphic to \(\R\), together with the topology induced by the \textbf{Zariski topology} of the prime spectrum of the ring \(C(X)\) (where we note that, by Urysohn's lemma, the topology induced by the Zariski topology coincides with the subspace topology of \(\beta X\), cf., e.g., \cite[Problem 1.26 (ii)]{AM}).
  Thus, if \(Y\) is an another topological space, and \(f^*: C(Y)\to C(X)\) is a morphism of rings, then \(f^*\) induces a unique continuous map \(\up f: \up X\to \up Y\) such that the following diagram commutes:
  \begin{equation}
    \label{eq: commutes nu f remark: real compact property}
    \begin{tikzcd}
      {C(\up Y)}
        \ar[r, "{(-)|_Y}", "\sim"']
        \ar[d, "{(-)\circ \up f}"'] &
      {C(Y)}
        \ar[d, "{f^*}"] \\
      {C(\up X)}
        \ar[r, "{(-)|_X}", "\sim"'] &
      {C(X)}.
    \end{tikzcd}
    \tag{\(\ddagger\)}
  \end{equation}
\end{remark}

\begin{remark}[Structure of the Realcompactification]
  \label{rmk: structure of realcompactification}
  It is a well-known fact that a locally compact Lindel\"of space \(X\) admits a \textbf{perfect map} (i.e., a continuous closed map whose fibers are compact) \(X\to \R\) (cf., e.g., \cite[Proposition 2.4]{Ishiki-san-extending-proper-metric}).
  This follows easily from Frol\'ik's theorem (cf. \cite{Frolik-product-of-paracompacts}, where we note that a Lindel\"of locally compact space is \v{C}ech-complete and paracompact), but we can directly prove this as follows:
  Since \(X\) is locally compact, the one-point compactification \(\alpha X\) of \(X\) is Hausdorff.
  Write \(\infty\in\alpha X\setminus X\) for the unique point.
  By Urysohn's theorem, for any \(x\in X\), there exists a continuous map \(f_x:\alpha X \to [0,1]\) such that \(f_x(\infty) = 0\), and \(f_x(x) = 1\).
  Since \(X\) is Lindel\"of, there exists a countable sequence \(x_0,x_1,\cdots\in X\) such that \(X = \bigcup_{n\in \N}f_{x_n}^{-1}((0,1])\).
  Then, the continuous map \(f\dfn \sum_{n\in \N}\frac{1}{2^n}f_{x_n}\) satisfies that \(f^{-1}(0) = \{\infty\}\).
  This implies that the restriction \(f|_X:X\to \R\) is perfect.

  Thus, if a full subcategory \(\{\R\}\cup \Cpt \subset \sfP\subset \Top\) is contained in the class ``locally compact Lindel\"of spaces'', then for any subset \(X\subset Z\subset \beta X\), the following assertions are equivalent:
  \begin{enumerate}
    \item There exists a continuous map \(f:X\to \R\) such that \(Z = \beta f^{-1}(\R)\).
    \item There exist an object \(Y\in \sfP\) and a continuous map \(f:X\to Y\) such that \(Z = \beta f^{-1}(Y)\).
    \item \(Z\) is locally compact Lindel\"of.
  \end{enumerate}
  In particular, it holds that
  \[\up X = \bigcap\left\{X\subset Y\subset \beta X\,\middle|\, \text{\(Y\) is Lindel\"of locally compact}\,\right\} \, \subset\, \beta X. \]
  This implies that for any perfect map \(f:X\to Y\), if \(Y\) is realcompact, then \(X\) is also realcompact.
\end{remark}

\begin{lemma}\label{cor: tilde nu P sub nu X}
  Let \(\Cpt\subset \sfP\subset \Top\) be a full subcategory and \(X\) a topological space.
  Then, the following assertions hold:
  \begin{assertion}
    \item \label{ass: scpt cor: tilde nu P sub nu X}
    Assume that \(\R \in \sfP\).
    Then, it holds that \(\tilde{\nu}_{\sfP}X\subset \up X\).
    \item \label{ass: rcpt cor: tilde nu P sub nu X}
    Assume that \(\sfP\subset \RCpt\).
    Then, it holds that \(\up X\subset \tilde{\nu}_{\sfP}X\).
  \end{assertion}
  In particular, if \(\{\R\}\cup\Cpt\subset \sfP\subset \RCpt\), then it holds that \(\up X = \tilde{\nu}_{\sfP}X\).
\end{lemma}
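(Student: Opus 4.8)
The plan is to prove the inclusions of \cref{ass: scpt cor: tilde nu P sub nu X} and \cref{ass: rcpt cor: tilde nu P sub nu X} separately and then combine them for the last assertion. \cref{ass: scpt cor: tilde nu P sub nu X} is essentially a formal consequence of the definitions: when \(\R\in\sfP\), every \(f\in C(X)\) is a continuous map from \(X\) into an object of \(\sfP\), so each set \(\beta f^{-1}(\R)\) with \(f\in C(X)\) already occurs among the sets whose intersection defines \(\tilde{\nu}_{\sfP}X\). Hence \(\tilde{\nu}_{\sfP}X\subset\bigcap_{f\in C(X)}\beta f^{-1}(\R)\), and the right-hand side equals \(\up X\) by \eqref{ass: real cpt ext remark: real compact property}.

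For \cref{ass: rcpt cor: tilde nu P sub nu X}, I would fix a continuous map \(f:X\to Y\) with \(Y\in\sfP\) and show \(\beta f(\up X)\subset Y\), i.e. \(\up X\subset\beta f^{-1}(Y)\); intersecting over all such \(f\) then gives \(\up X\subset\tilde{\nu}_{\sfP}X\). Given \(p\in\up X\), set \(q\dfn\beta f(p)\in\beta Y\). For each \(g\in C(Y)\) the composite \(g\circ f\) lies in \(C(X)\), so \eqref{ass: real cpt ext remark: real compact property} applied to \(X\) (together with \(p\in\up X\)) yields \(\beta(g\circ f)(p)\in\R\); by functoriality of \(\beta(-)\) we have \(\beta(g\circ f)=\beta g\circ\beta f\), hence \(\beta g(q)\in\R\). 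Letting \(g\) vary over \(C(Y)\) and applying \eqref{ass: real cpt ext remark: real compact property} to \(Y\) gives \(q\in\up Y\); since \(Y\in\sfP\subset\RCpt\) is realcompact we have \(\up Y=Y\) (cf. \cref{remark: real compact property}), so \(q\in Y\), as required.

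For the final assertion, if \(\{\R\}\cup\Cpt\subset\sfP\subset\RCpt\) then \(\R\in\sfP\) makes \cref{ass: scpt cor: tilde nu P sub nu X} applicable and \(\sfP\subset\RCpt\) makes \cref{ass: rcpt cor: tilde nu P sub nu X} applicable, so \(\tilde{\nu}_{\sfP}X\subset\up X\subset\tilde{\nu}_{\sfP}X\). I do not expect a genuine obstacle; the only step needing a little attention is the identity \(\beta g(\beta f(p))=\beta(g\circ f)(p)\), which is simply functoriality of the \SCCpt\ combined with the description of \(\up X\) as the set of points of \(\beta X\) that every \(\beta f\) (\(f\in C(X)\)) carries into \(\R\), i.e. \eqref{ass: real cpt ext remark: real compact property}.
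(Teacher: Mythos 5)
Your proof is correct, and part \ref{ass: rcpt cor: tilde nu P sub nu X} takes a genuinely different route from the paper. For \ref{ass: scpt cor: tilde nu P sub nu X} and the final combination you argue exactly as the paper does. For \ref{ass: rcpt cor: tilde nu P sub nu X}, however, the paper does not chase points: it first observes that each \(\beta f^{-1}(Y)\) with \(Y\in\sfP\subset\RCpt\) is itself realcompact --- this is justified by the perfect-map machinery of \cref{rmk: structure of realcompactification} (the restriction \(\beta f^{-1}(Y)\to Y\) is perfect, and perfect preimages of realcompact spaces are realcompact) --- and then invokes the minimality of \(\up X\) among realcompact subspaces of \(\beta X\) containing \(X\). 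Your argument instead shows directly that \(\beta f\) carries real points of \(\beta X\) to real points of \(\beta Y\): for \(p\in\up X\) and \(g\in C(Y)\), functoriality gives \(\beta g(\beta f(p))=\beta(g\circ f)(p)\in\R\) by \eqref{ass: real cpt ext remark: real compact property} for \(X\), so \(\beta f(p)\in\up Y=Y\) by \eqref{ass: real cpt ext remark: real compact property} for \(Y\) and realcompactness of \(Y\). This is sound, and it buys self-containedness: you use only the description \eqref{ass: real cpt ext remark: real compact property} and functoriality of \(\beta(-)\), with no appeal to perfect maps or to the minimality characterization of \(\up X\). The paper's version is shorter given that machinery, which it needs elsewhere anyway (e.g.\ in \cref{lem: perfect inverse kappa Lind}), but your pointwise argument is the more elementary of the two.
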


\begin{proof}
  Since \(\R\in \sfP\), \cref{ass: scpt cor: tilde nu P sub nu X} follows immediately from \cref{ass: real cpt ext remark: real compact property} in \cref{remark: real compact property} and the definition of \(\tilde{\nu}_{\sfP}X\).
  Let \(f:X\to Y\) be a continuous map such that \(Y\) is realcompact.
  Then, \(\beta f^{-1}(Y)\) is also realcompact (cf. \cref{rmk: structure of realcompactification}).
  Thus, \cref{ass: rcpt cor: tilde nu P sub nu X} follows immediately from the fact that \(\up X\) is the minimal realcompact subspace of \(\beta X\) such that \(X\subset \up X\) (cf. the sentence after the \cref{ass: real cpt ext remark: real compact property} in \cref{remark: real compact property}).
  This completes the proof of \cref{cor: tilde nu P sub nu X}.
\end{proof}

We then conclude our main result in the present paper.

\begin{theorem}\label{cor: func P-ifiable iff nu X in P}
  Let \(\Cpt\subset \sfP\subset \Top\) be a full subcategory such that \(\R\in \sfP\subset \RCpt\) and \(X\) a topological space.
  Then, the following assertions are equivalent:
  \begin{assertion}
    \item \label{ass: Piable cor: func P-ifiable iff nu X in P}
    \(X\in \sfP^{\nu}\).
    \item \label{ass: nuX in P cor: func P-ifiable iff nu X in P}
    \(\up X\in \sfP\).
    \item \label{ass: CX isom cor: func P-ifiable iff nu X in P}
    There exists an object \(Y\in \sfP\) such that \(C(X)\) is isomorphic as a ring to \(C(Y)\).
  \end{assertion}
  Moreover, if \cref{ass: CX isom cor: func P-ifiable iff nu X in P} holds, then the \Pify\ of \(X\) coincides with \(Y\).
\end{theorem}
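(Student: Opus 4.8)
The plan is to reduce the entire statement to the single identity $\tilde{\nu}_{\sfP}X = \up X$. The hypotheses $\Cpt\subset\sfP$, $\R\in\sfP$ and $\sfP\subset\RCpt$ are precisely what is needed for the ``in particular'' clause of \cref{cor: tilde nu P sub nu X}, so I would first record that $\tilde{\nu}_{\sfP}X = \up X$ as subspaces of $\beta X$. Granting this, the equivalence \cref{ass: Piable cor: func P-ifiable iff nu X in P} $\Leftrightarrow$ \cref{ass: nuX in P cor: func P-ifiable iff nu X in P} is immediate from \cref{lem: nu X in SC Lind}: $X$ is \Piable\ if and only if $\tilde{\nu}_{\sfP}X\in\sfP$, and the latter condition now reads $\up X\in\sfP$.

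For \cref{ass: nuX in P cor: func P-ifiable iff nu X in P} $\Rightarrow$ \cref{ass: CX isom cor: func P-ifiable iff nu X in P} I would simply take $Y\dfn\up X$: this lies in $\sfP$ by assumption, and \cref{remark: real compact property} tells us that the restriction morphism $(-)|_X\colon C(\up X)\to C(X)$ is a ring isomorphism, so $C(X)$ and $C(Y)$ are isomorphic as rings. The substantive implication is \cref{ass: CX isom cor: func P-ifiable iff nu X in P} $\Rightarrow$ \cref{ass: nuX in P cor: func P-ifiable iff nu X in P}. Given $Y\in\sfP$ and a ring isomorphism $C(X)\cong C(Y)$, I would use the reconstruction of $\up(-)$ recalled in \cref{remark: real compact property} --- $\up(-)$ is the set of maximal ideals of $C(-)$ with residue field $\R$, topologized by the Zariski topology, which by Urysohn's lemma coincides with the subspace topology inherited from $\beta(-)$ --- to conclude directly that $\up X\cong\up Y$ as topological spaces (equivalently, one may feed the isomorphism and its inverse into the functoriality diagram \cref{eq: commutes nu f remark: real compact property} and invoke the uniqueness clause to see that the two induced continuous maps are mutually inverse). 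Since $Y\in\sfP\subset\RCpt$ is realcompact, every real point of $\beta Y$ already lies in $Y$, so $\up Y = Y$; hence $\up X$ is homeomorphic to $Y\in\sfP$, and therefore $\up X\in\sfP$, using that $\sfP$, being a full subcategory determined by a topological property, is closed under homeomorphism.

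The ``moreover'' clause would then follow without extra work: if \cref{ass: CX isom cor: func P-ifiable iff nu X in P} holds, then by the equivalences above $X$ is \Piable, and \cref{lem: nu X and naive} together with $\tilde{\nu}_{\sfP}X = \up X$ gives $\nu_{\sfP}X = \up X$, which is homeomorphic to $Y$ by the argument of the preceding paragraph.

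I expect the one genuine obstacle to be the implication \cref{ass: CX isom cor: func P-ifiable iff nu X in P} $\Rightarrow$ \cref{ass: nuX in P cor: func P-ifiable iff nu X in P} --- that is, promoting a bare ring isomorphism $C(X)\cong C(Y)$ to an honest homeomorphism $\up X\cong\up Y$; this rests on both the functoriality of $\up(-)$ in ring homomorphisms and the identification of the Zariski topology with the subspace topology, both of which are already assembled in \cref{remark: real compact property}, together with the (standing) convention that $\sfP$ is replete. The remaining implications are pure bookkeeping on top of \cref{cor: tilde nu P sub nu X}, \cref{lem: nu X in SC Lind} and \cref{lem: nu X and naive}.
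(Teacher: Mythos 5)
Your proposal is correct and follows essentially the same route as the paper: the equivalence of the first two assertions via $\tilde{\nu}_{\sfP}X=\up X$ (\cref{cor: tilde nu P sub nu X}) combined with \cref{lem: nu X in SC Lind}, and the equivalence with the third via the reconstruction of $\up(-)$ from $C(-)$ as the real maximal spectrum recalled in \cref{remark: real compact property}. You merely spell out the steps (including the repleteness of $\sfP$ and the identity $\up Y=Y$ for realcompact $Y$) that the paper leaves implicit.
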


\begin{proof}
  Equivalent ``\ref{ass: Piable cor: func P-ifiable iff nu X in P} \(\Leftrightarrow\) \ref{ass: nuX in P cor: func P-ifiable iff nu X in P}'' follows immediately from \cref{lem: nu X in SC Lind} and \cref{cor: tilde nu P sub nu X}.
  Equivalent ``\ref{ass: nuX in P cor: func P-ifiable iff nu X in P} \(\Leftrightarrow\) \ref{ass: CX isom cor: func P-ifiable iff nu X in P}'' follows immediately from the fact that the realcompactification of \(X\) can be reconstructed as a set of maximal ideals of \(C(X)\) whose residue field is \(\R\), together with the Zariski topology (cf. the above sentence of the diagram \eqref{eq: commutes nu f remark: real compact property} in \cref{remark: real compact property}).
  The last assertion also follows formally from this fact.
  This completes the proof of \cref{cor: func P-ifiable iff nu X in P}.
\end{proof}

\begin{corollary}\label{cor: sfP nu cap RCpt is sfP}
  Let \(\Cpt\subset \sfP\subset \Top\) be a full subcategory such that \(\R\in \sfP\subset \RCpt\).
  Then, it holds that \(\sfP^{\nu}\cap \RCpt = \sfP\).
\end{corollary}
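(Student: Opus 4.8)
The plan is to establish the equality \(\sfP^{\nu}\cap \RCpt = \sfP\) by proving the two inclusions separately, and in fact both directions will be essentially immediate consequences of \cref{cor: func P-ifiable iff nu X in P} together with the elementary observation that a realcompact space is canonically its own realcompactification.

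For the inclusion \(\sfP\subset \sfP^{\nu}\cap \RCpt\), I would simply invoke the remark following the definition of the \Pify, which records that \(\sfP\subset \Pfifiable\) (the identity map witnesses the required universal property for any \(X\in\sfP\)), and combine it with the standing hypothesis \(\sfP\subset \RCpt\). No further argument is needed here.

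For the reverse inclusion \(\sfP^{\nu}\cap \RCpt\subset \sfP\), let \(X\) be a topological space lying in \(\sfP^{\nu}\cap \RCpt\). Since \(X\) is realcompact, the characterization of realcompactness recalled in \cref{remark: real compact property}---namely that every real point of \(\beta X\) already belongs to \(X\)---together with the description of \(\up X\) as the set of real points of \(X\) in \(\beta X\) (equivalently, \eqref{ass: real cpt ext remark: real compact property} and the minimality of \(\up X\) among realcompact subspaces of \(\beta X\) containing \(X\)), yields \(\up X = X\). On the other hand, \(X\in\sfP^{\nu}\) means precisely that \(X\) is \Piable, so the equivalence ``\ref{ass: Piable cor: func P-ifiable iff nu X in P} \(\Leftrightarrow\) \ref{ass: nuX in P cor: func P-ifiable iff nu X in P}'' of \cref{cor: func P-ifiable iff nu X in P} gives \(\up X\in\sfP\). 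Combining the two, \(X = \up X\in\sfP\), as desired.

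I do not expect any genuine obstacle: the corollary is a formal repackaging of \cref{cor: func P-ifiable iff nu X in P}. The only point that warrants a moment's care is confirming that the assignment \(X\mapsto\up X\) restricts to the identity on \(\RCpt\), but this is immediate from the material in \cref{remark: real compact property}. One could also phrase the whole argument in one line---\(X\in\sfP^{\nu}\cap\RCpt\) iff \(\up X\in\sfP\) and \(\up X = X\), iff \(X\in\sfP\)---but splitting into the two inclusions keeps the logic transparent.
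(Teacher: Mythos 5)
Your proof is correct and follows essentially the same route as the paper's: the forward inclusion from \(\sfP\subset\sfP^{\nu}\) together with \(\sfP\subset\RCpt\), and the reverse inclusion by combining \(\up X = X\) for realcompact \(X\) with the equivalence \ref{ass: Piable cor: func P-ifiable iff nu X in P} \(\Leftrightarrow\) \ref{ass: nuX in P cor: func P-ifiable iff nu X in P} of \cref{cor: func P-ifiable iff nu X in P}. No issues.
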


\begin{proof}
  Since \(\sfP\subset \sfP^{\nu}\), it holds that \(\sfP\subset \sfP^{\nu}\cap\RCpt\).
  Let \(X\) be \Piable\ realcompact topological space.
  Then, by the equivalence ``\ref{ass: Piable cor: func P-ifiable iff nu X in P} \(\Leftrightarrow\) \ref{ass: nuX in P cor: func P-ifiable iff nu X in P}'' in \cref{cor: func P-ifiable iff nu X in P}, it holds that \(X = \up X\in \sfP\).
  In particular, \(\sfP^{\nu}\cap \RCpt \subset \sfP\).
  This completes the proof of \cref{cor: sfP nu cap RCpt is sfP}.
\end{proof}

Next, we prove that \(\sfP\subsetneq \sfP^{\nu}\) if \(\R \in \sfP \subset \RCpt\).

\begin{definition}\label{def: pseudocompact}
  We shall write \(\PsCpt\subset \Top\) for the full subcategory consisting of pseudocompact topological spaces.
  Recall that a topological space \(X\) is \textbf{pseudocompact} if any continuous map \(X\to \R\) is bounded, i.e., \(C^*(X) = C(X)\).
\end{definition}

By the definition of the notion of a pseudocompact space, a topological space \(X\) is pseudocompact if and only if \(\up X = \beta X\).
Thus, the following assertion holds:

\begin{proposition}\label{prop: sfP nu pscpt}
  Let \(\Cpt\subset \sfP\subset \Top\) be a full subcategory such that \(\R\in \sfP\subset \RCpt\).
  Then, it holds that \(\PsCpt \subset \sfP^{\nu}\).
  In particular, it holds that \(\sfP\subsetneq \sfP^{\nu}\).
\end{proposition}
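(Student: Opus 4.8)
The plan is to deduce the proposition directly from \cref{cor: func P-ifiable iff nu X in P} together with the criterion recorded just above the statement: a topological space \(X\) is pseudocompact if and only if \(\up X = \beta X\).

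First I would take an arbitrary pseudocompact space \(X\), so that \(\up X = \beta X\). Since \(\beta X\) is a compact Hausdorff space, it lies in \(\Cpt\subset \sfP\); hence \(\up X\in\sfP\). The hypotheses \(\Cpt\subset\sfP\) and \(\R\in\sfP\subset\RCpt\) are precisely those of \cref{cor: func P-ifiable iff nu X in P}, so the implication ``\ref{ass: nuX in P cor: func P-ifiable iff nu X in P} \(\Rightarrow\) \ref{ass: Piable cor: func P-ifiable iff nu X in P}'' there yields \(X\in\sfP^{\nu}\). Letting \(X\) range over all pseudocompact spaces gives \(\PsCpt\subset\sfP^{\nu}\).

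For the strict inclusion \(\sfP\subsetneq\sfP^{\nu}\) it then suffices to exhibit one space lying in \(\PsCpt\setminus\sfP\). Since \(\sfP\subset\RCpt\), any pseudocompact space that fails to be realcompact will do: a space that is simultaneously pseudocompact and realcompact satisfies \(X=\up X=\beta X\) and is therefore compact, so \(\PsCpt\cap\RCpt=\Cpt\subsetneq\PsCpt\). The standard witness is the space \([0,\omega_1)\) of countable ordinals with the order topology: every continuous real-valued function on it is eventually constant, hence bounded, so \([0,\omega_1)\) is pseudocompact, but it is not compact. Thus \([0,\omega_1)\in\sfP^{\nu}\setminus\sfP\), which gives the claim.

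Once \cref{cor: func P-ifiable iff nu X in P} and the pseudocompactness criterion are in hand the argument is essentially immediate; there is no genuine obstacle here, the only slightly non-formal ingredient being the classical fact that a pseudocompact realcompact space is compact (equivalently, that \([0,\omega_1)\) is pseudocompact but not realcompact), for which one may cite \cite{GJ-rings-of-conti}.
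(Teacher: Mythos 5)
Your proposal is correct and follows essentially the same route as the paper: deduce \(\PsCpt\subset\sfP^{\nu}\) from \cref{cor: func P-ifiable iff nu X in P} via \(\up X=\beta X\in\Cpt\subset\sfP\), and obtain strictness from \(\PsCpt\cap\RCpt=\Cpt\subsetneq\PsCpt\) together with \(\sfP\subset\RCpt\). The only difference is that you exhibit the explicit witness \([0,\omega_1)\), which the paper leaves implicit (and in fact verifies later in \cref{lem: example lem}).
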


\begin{proof}
  Since \(\Cpt\subset \sfP\), it follows from \cref{cor: func P-ifiable iff nu X in P} and the definition of the notion of a pseudocompact space that \(\PsCpt \subset \sfP^{\nu}\).
  Moreover, it follows immediately from the definition of the notion of a pseudocompact realcompact space that \(\PsCpt \cap \RCpt = \Cpt\).
  Since \(\Cpt\subsetneq \PsCpt\), this implies that \(\sfP\subsetneq \sfP^{\nu}\).
  This completes the proof of \cref{prop: sfP nu pscpt}.
\end{proof}

At the end of the present section, we apply \cref{cor: func P-ifiable iff nu X in P} to the category of Lindel\"of spaces.

\begin{definition}
  We shall write \(\Lind\subset\Top\) for the full subcategory determined by the Lindel\"of spaces.
\end{definition}

\begin{corollary}\label{cor: func lind}
  Let \(X\) be a topological space.
  Then, the following assertions hold:
    \begin{assertion}
      \item \(X\) is functorial Lindel\"ofifiable.
      \item The realcompactification \(\up X\) of \(X\) is Lindel\"of.
      \item There exists a Lindel\"of space \(Y\) such that \(C(X)\) is isomorphic as a ring to \(C(Y)\).
    \end{assertion}
\end{corollary}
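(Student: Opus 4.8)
The plan is to deduce \cref{cor: func lind} directly from \cref{cor: func P-ifiable iff nu X in P} applied to the full subcategory \(\sfP = \Lind \subset \Top\). To invoke that theorem I must verify its three standing hypotheses for \(\Lind\), namely \(\Cpt \subset \Lind\), \(\R \in \Lind\), and \(\Lind \subset \RCpt\). The first is immediate, since a compact space is Lindel\"of; the second holds because \(\R\) is second countable, hence Lindel\"of.

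The only nontrivial point is \(\Lind \subset \RCpt\), i.e., that every (completely regular Hausdorff) Lindel\"of space is realcompact. This is classical (cf. \cite[Chapter 8]{GJ-rings-of-conti}), but I would include a short direct argument in the spirit of \cref{rmk: structure of realcompactification}. Let \(X\) be Lindel\"of and suppose \(p \in \beta X \setminus X\). For each \(x \in X\), Urysohn's lemma applied in the compact Hausdorff space \(\beta X\) furnishes a continuous map \(g_x : \beta X \to [0,1]\) with \(g_x(x) = 1\) and \(g_x(p) = 0\); the cozero-sets \(\{y \in X : g_x(y) > 0\}\) then cover \(X\), so by the Lindel\"of property some countable subfamily indexed by \(x_0, x_1, \dots \in X\) still covers \(X\). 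The series \(\sum_{n\in\N} 2^{-n} g_{x_n}\) converges uniformly on \(\beta X\), so \(g \dfn \sum_{n\in\N} 2^{-n} g_{x_n}|_X\) lies in \(C^*(X) \subset C(X)\), its unique extension to \(\beta X\) is \(\sum_{n\in\N} 2^{-n} g_{x_n}\), and that extension vanishes at \(p\). Since the covering property gives \(g(y) > 0\) for every \(y \in X\), the function \(1/g \in C(X)\) is unbounded on every neighborhood of \(p\) in \(X\) (as \(g\) takes values arbitrarily close to \(0\) there), hence admits no continuous extension over \(X \cup \{p\}\); thus \(p\) is not a real point. Therefore \(\up X = X\), i.e., \(X\) is realcompact.

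With these three facts in hand, \cref{cor: func P-ifiable iff nu X in P} applies to \(\sfP = \Lind\), and its equivalences \cref{ass: Piable cor: func P-ifiable iff nu X in P}, \cref{ass: nuX in P cor: func P-ifiable iff nu X in P}, \cref{ass: CX isom cor: func P-ifiable iff nu X in P} specialize to the three assertions of \cref{cor: func lind}: membership in \(\Lind^{\nu}\) is, by definition of the \Pify\ for \(\sfP = \Lind\), functorial Lindel\"ofifiability; \(\up X \in \Lind\) says exactly that \(\up X\) is Lindel\"of; and the third condition is the existence of a Lindel\"of space \(Y\) with \(C(X) \cong C(Y)\). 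I do not anticipate a genuine obstacle here: the only substantive input beyond \cref{cor: func P-ifiable iff nu X in P} is the standard inclusion \(\Lind \subset \RCpt\), and the remainder is a direct translation.
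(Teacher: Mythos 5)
Your proposal is correct and follows the same route as the paper: both deduce the corollary by applying \cref{cor: func P-ifiable iff nu X in P} to \(\sfP = \Lind\) after noting \(\{\R\}\cup\Cpt \subset \Lind \subset \RCpt\). The only difference is that you supply a (correct) direct proof of the classical inclusion \(\Lind\subset\RCpt\), which the paper simply invokes as a known fact.
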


\begin{proof}
  \Cref{cor: func lind} follows immediately from \cref{cor: func P-ifiable iff nu X in P}, together with the fact that \(\{\R\}\cup\Cpt \subset \Lind\subset \RCpt\).
\end{proof}
\Section[kappa-Lind]{Functorial \(\kappa\)-Lindel\"ofifiability of Discrete Spaces}

In this section, we study a relationship between the functorial \(\kappa\)-Lindel\"ofifiability of discrete spaces and properties of the cardinality of discrete spaces.

\begin{definition}
  Let \(\kappa\) and \(\lambda\) be cardinals, \(\alpha\) an ordinal, and \(X\) a topological space.
  \begin{enumerate}
    \item
    We shall use the notations \(\alpha + 1\), \(\omega_{\alpha}\), \(\aleph_{\alpha}\), \(\kappa^+\), and \(\cf(\alpha)\) as they are defined in \cite[Chapter 1, 7.10, 7.18, 10.17, 10.18, 10.30]{Kunen-set-intro}.
    Then, it is a well-known fact that \(\alpha + 1\) is compact.
    \item
    We shall say that \(X\) is \textbf{\(\kappa\)-Lindel\"of} if for any open covering \(\mcU\) of \(X\), there exists a subset \(\mcV\subset \mcU\) such that \(\bigcup \mcV = X\), and \(|\mcV|<\kappa\).
    \item
    We shall write \(\KLind\subset \Top\) for the full subcategory determined by the \(\kappa\)-Lindel\"of spaces.
    \item
    We shall write
    \[
      \up_{\kappa}X \dfn \bigcap \left\{
        X\subset Y\subset \beta X \,\middle|\, Y\in\KLind\,
      \right\} \,\subset\, \beta X.
    \]
  \end{enumerate}
\end{definition}

\begin{remark}\label{rem: kappa Lindify deg remarks}
  Note that the notion of an \(\aleph_0\)-Lindel\"of space (resp. an \(\aleph_1\)-Lindel\"of space) is equivalent to the notion of a compact space (resp. a Lindel\"of space).
  In particular, it holds that \(\beta X = \up_{\aleph_0}X\) and that \(\up X=\up_{\aleph_1} X\).
  Moreover, it holds that
  \[
    \beta X = \up_{\aleph_0} X \,\supset\, \up X = \up_{\aleph_1}X \,\supset\, \cdots \,\supset\, \up_{|X|^+} X = X.
  \]
\end{remark}

\begin{lemma}\label{lem: perfect inverse kappa Lind}
  Let \(\kappa\), \(\lambda\) be infinite cardinals and \(f:X\to Y\) a perfect map (cf. \cref{rmk: structure of realcompactification}).
  Then, if \(Y\) is \(\kappa\)-Lindel\"of, then \(X\) is also \(\kappa\)-Lindel\"of.
\end{lemma}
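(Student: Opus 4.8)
The plan is to run the classical argument that perfect preimages inherit covering-type properties, while keeping track of the cardinality of the subcovers produced. The only real ingredient is the standard neighbourhood lemma for closed maps: if \(g\colon X\to Y\) is closed, \(y\in Y\), and \(U\subset X\) is open with \(g^{-1}(y)\subset U\), then \(V\dfn Y\setminus g(X\setminus U)\) is an open neighbourhood of \(y\) with \(g^{-1}(V)\subset U\). Indeed, \(V\) is open precisely because \(g\) is closed, \(y\in V\) because \(g^{-1}(y)\cap(X\setminus U)=\emptyset\), and \(g^{-1}(V)\subset U\) directly from the definition of \(V\). Note that this step uses only closedness of \(g\), not compactness of its fibres.

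First I would fix an arbitrary open cover \(\mcU\) of \(X\). For each \(y\in Y\), the fibre \(f^{-1}(y)\) is compact, so there is a finite subfamily \(\mcU_y\subset\mcU\) with \(f^{-1}(y)\subset\bigcup\mcU_y\) (for \(y\notin f(X)\) one simply takes \(\mcU_y=\emptyset\)). Setting \(U_y\dfn\bigcup\mcU_y\) and applying the neighbourhood lemma to \(f\), I obtain for each \(y\in Y\) an open neighbourhood \(V_y\ni y\) with \(f^{-1}(V_y)\subset U_y\). Then \(\{V_y\}_{y\in Y}\) is an open cover of \(Y\), so since \(Y\) is \(\kappa\)-Lindel\"of there is \(S\subset Y\) with \(|S|<\kappa\) and \(Y=\bigcup_{y\in S}V_y\). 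Consequently
\[
  X=f^{-1}(Y)=\bigcup_{y\in S}f^{-1}(V_y)\subset\bigcup_{y\in S}U_y=\bigcup_{y\in S}\Bigl(\bigcup\mcU_y\Bigr),
\]
so \(\mcV\dfn\bigcup_{y\in S}\mcU_y\subset\mcU\) is a subcover of \(X\). Finally, since each \(\mcU_y\) is finite, \(|\mcV|\le\sum_{y\in S}|\mcU_y|\); when \(S\) is finite this is a finite number, hence \(<\aleph_0\le\kappa\), and when \(S\) is infinite it is bounded by \(|S|\cdot\aleph_0=|S|<\kappa\). In either case \(|\mcV|<\kappa\), so \(X\) is \(\kappa\)-Lindel\"of.

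I do not anticipate a serious obstacle: the argument is routine once the closed-map neighbourhood lemma is recalled, and the only place requiring a moment's care is the cardinal bookkeeping in the last step — in particular the boundary case \(\kappa=\aleph_0\), which merely recovers the classical fact that a perfect preimage of a compact space is compact. The cardinal \(\lambda\) appearing in the statement plays no role in the proof.
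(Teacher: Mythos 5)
Your proof is correct and follows essentially the same route as the paper's: cover each compact fibre by finitely many members of \(\mcU\), push down to an open neighbourhood \(V_y\) of \(y\) using closedness of \(f\), extract a subcover of size \(<\kappa\) in \(Y\), and pull back. The only cosmetic difference is that you apply the closed-map neighbourhood lemma to the single set \(U_y=\bigcup\mcU_y\) where the paper intersects the sets \(Y\setminus f(X\setminus U)\) over \(U\in\mcU_y\); your explicit cardinal bookkeeping at the end is a welcome addition that the paper leaves implicit.
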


\begin{proof}
  Assume that \(Y\) is \(\kappa\)-Lindel\"of.
  Let \(\mcU\) be an open covering of \(X\) and \(y\in Y\) a point.
  Since \(f^{-1}(y)\) is compact, there exists a finite subset \(\mcU_y\subset \mcU\) such that \(f^{-1}(y)\subset \bigcup \mcU_y\).
  For any \(y\in \im(f)\), write \(V_y\dfn \bigcap_{U\in \mcU_y}(Y\setminus f(X\setminus U))\).
  Since \(f\) is closed, \(V_y\) is open.
  Moreover, since \(y\in V_y\), the family \(\mcV\dfn \left\{V_y\,\middle|\, y\in \im(f)\right\}\) is an open covering of \(Y\).
  Since \(Y\) is \(\kappa\)-Lindel\"of, there exists a subset \(\mcV_0\subset \mcV\) such that \(Y=\bigcup\mcV_0\), and, moreover, \(|\mcV_0| < \kappa\).
  Then, it follows immediately that the subset \(\mcU_0\dfn \bigcup\left\{\mcU_y\,\middle|\,y\in Y, V_y\in \mcV_0\right\}\) is an open covering of \(X\) such that \(|\mcU_0| < \kappa\).
  This implies that \(X\) is \(\kappa\)-Lindel\"of.
  This completes the proof of \cref{lem: perfect inverse kappa Lind}.
\end{proof}

\begin{corollary}\label{cor: kappa lindify}
  Let \(X\) be a topological space and \(\kappa\) an infinite cardinal.
  Then, \(X\) is functorial \(\kappa\)-Lindel\"ofifiable if and only if \(\up_{\kappa}X\) is \(\kappa\)-Lindel\"of.
\end{corollary}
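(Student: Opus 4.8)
The plan is to identify the set \(\up_{\kappa}X\) with the explicit model \(\tilde{\nu}_{\KLind}X\) of the functorial \(\KLind\)-ification furnished by \cref{def: func P-ify construct}, and then to invoke \cref{lem: nu X in SC Lind}. First I would observe that \(\Cpt\subset\KLind\): a finite subcover has cardinality \(<\aleph_0\le\kappa\). Hence \cref{lem: nu X in SC Lind} applies with \(\sfP=\KLind\), and it reduces the corollary to proving the single equality \(\tilde{\nu}_{\KLind}X=\up_{\kappa}X\); for once this is known, ``\(X\) is functorial \(\kappa\)-Lindel\"ofifiable'' becomes ``\(\tilde{\nu}_{\KLind}X\in\KLind\)'', i.e. ``\(\up_{\kappa}X\) is \(\kappa\)-Lindel\"of''.

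For the inclusion \(\tilde{\nu}_{\KLind}X\subset\up_{\kappa}X\), take any \(Y\) with \(X\subset Y\subset\beta X\) and \(Y\in\KLind\), and let \(\iota:X\hookrightarrow Y\) be the inclusion. By \cref{rmk: beta minus one pt is alm cpt} the induced map \(\beta\iota:\beta X\to\beta Y\) is a homeomorphism, and a short diagram chase against the universal property of \(\beta Y\) shows that it carries the given copy \(Y\subset\beta X\) onto the canonical copy \(Y\subset\beta Y\), so that \(\beta\iota^{-1}(Y)=Y\). Therefore \(Y\) is one of the sets whose intersection defines \(\tilde{\nu}_{\KLind}X\), whence \(\tilde{\nu}_{\KLind}X\subset Y\); intersecting over all such \(Y\) gives the claimed inclusion.

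For the reverse inclusion \(\up_{\kappa}X\subset\tilde{\nu}_{\KLind}X\), take any continuous map \(f:X\to Y\) with \(Y\in\KLind\) and set \(Z\dfn\beta f^{-1}(Y)\subset\beta X\), where \(Y\) is regarded inside \(\beta Y\). Since \(\beta f:\beta X\to\beta Y\) is a continuous map between compact Hausdorff spaces, it is perfect; and the restriction of a perfect map to the preimage of a subspace is again perfect, so \(\beta f|_Z:Z\to Y\) is perfect. By \cref{lem: perfect inverse kappa Lind}, \(Z\) is \(\kappa\)-Lindel\"of, and clearly \(X\subset Z\subset\beta X\), so \(Z\) is one of the sets whose intersection defines \(\up_{\kappa}X\). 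Hence \(\up_{\kappa}X\subset Z=\beta f^{-1}(Y)\); intersecting over all \(f\) and all \(Y\in\KLind\) yields \(\up_{\kappa}X\subset\tilde{\nu}_{\KLind}X\), and the equality follows.

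The only point that I expect to require genuine care is the assertion that \(\beta f|_{\beta f^{-1}(Y)}\) is perfect, which amounts to checking that \(\beta f\) carries closed subsets of \(\beta f^{-1}(Y)\) onto closed subsets of \(Y\); this follows from the identity \(\beta f(\,\overline{C}\,)\cap Y=\beta f(C)\) for \(C\) closed in \(\beta f^{-1}(Y)\) (with \(\overline{C}\) the closure in \(\beta X\)) together with compactness of the fibres of \(\beta f\). This is the \(\kappa\)-Lindel\"of analogue of the realcompactness argument in \cref{rmk: structure of realcompactification}, but simpler, since no appeal to local compactness or to perfect maps into \(\R\) is needed. With \(\tilde{\nu}_{\KLind}X=\up_{\kappa}X\) established, \cref{lem: nu X in SC Lind} completes the proof.
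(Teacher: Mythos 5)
Your proof is correct and takes essentially the same route as the paper's: the paper also deduces the equality \(\tilde{\nu}_{\KLind}X = \up_{\kappa}X\) from \cref{lem: perfect inverse kappa Lind} (your two inclusions, via the homeomorphism \(\beta X \cong \beta Y\) for \(X\subset Y\subset\beta X\) and the perfectness of \(\beta f|_{\beta f^{-1}(Y)}\), are exactly the details being elided there) and then concludes from the general criterion that \(X\) is \Piable\ if and only if \(\tilde{\nu}_{\sfP}X\in\sfP\).
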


\begin{proof}
  By \cref{lem: perfect inverse kappa Lind}, it holds that \(\tilde{\nu}_{\KLind}X = \up_{\kappa} X\).
  Thus, \cref{cor: kappa lindify} follows immediately from \cref{lem: nu X and naive}.
\end{proof}

Next, we study the functorial \(\kappa\)-Lindel\"ofifiability of a discrete space.

\begin{remark}\label{rmk: measurable def}
  Let \(\kappa\) is a cardinal.
  Recall that a filter \(U\) on a set \(X\) is \(\kappa\)-complete if for any family \(U_0\subset U\) such that \(|U_0| < \kappa\), it holds that \(\bigcap U_0\in U\).
  Note that a \(\{0,1\}\)-valued (\(\sigma\)-additive) measure \(\mu\) on a set \(X\) corresponds to an \(\aleph_0\)-complete ultrafilter \(\mcF_{\mu}\) as follows: for any \(U\subset X\), \(\mu(U)=1\) if and only if \(U\in \mcF_{\mu}\).

  Recall that a cardinal \(\kappa\) is \textbf{measurable} if \(\kappa\) is uncountable, and, moreover, there exists a \(\kappa\)-complete nonprincipal ultrafilter on \(\kappa\) (cf. \cite[Definition 10.3]{Jech-set-theory}).
  Note that this definition is different to the definition of the measurablility in \cite[Chapter 12]{GJ-rings-of-conti}.
  By \cite[Lemma 10.2]{Jech-set-theory}, the least cardinal that carries a non-trivial \(\{0,1\}\)-valued (\(\sigma\)-additive) measure is measurable.
  Conversely, if \(\kappa\) is measurable, then the \(\{0,1\}\)-valued measure on \(\kappa\) constructed from a \(\kappa\)-complete ultrafilter is (\(\sigma\)-additive and) non-trivial.
  Hence, the least cardinal that carries a non-trivial \(\{0,1\}\)-valued measure is equal to the least measurable cardinal.

  If \(\kappa\) admits a non-trivial \(\{0,1\}\)-valued (\(\sigma\)-additive) measure, then, for any \(\lambda > \kappa\), \(\lambda\) admits a non-trivial \(\{0,1\}\)-valued measure.
  Thus, by \cite[Theorem 12.2]{GJ-rings-of-conti}, a discrete space \(|X|\) is realcompact if and only if any cardinal less than \(|X|\) is non-measurable.
\end{remark}

In the remainder of the present section, we use the following notation:

\begin{definition}
  Let \(X\) be a discrete space.
  \begin{enumerate}
    \item
    Let \(U\subset X\) be a subset.
    We shall write
    \[O(U)\dfn \left\{\mcF\,\middle|\, \text{\(\mcF\) is an ultrafilter over \(X\), and, moreover, \(U\in \mcF\)}\, \right\}.\]
    \item
    We shall regard \(\beta X\) as the set of ultrafilters over \(X\) equipped with the topology generated by the family \(\{O(U)\,|\,U\subset X\}\).
    Then, we identify a point of \(X\) with a principal ultrafilter over \(X\).
  \end{enumerate}
\end{definition}

\begin{theorem}\label{lem: p in up kappa and completeness}
  Let \(\kappa\) be an infinite cardinal, \(X\) a discrete space, and \(\mcF\in \beta X \setminus X\) a point.
  Then, \(\mcF\in \up_{\kappa} X\) if and only if \(\mcF\) is \(\kappa\)-complete.
\end{theorem}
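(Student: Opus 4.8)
The plan is to work directly with the realization of \(\beta X\) as the space of ultrafilters over the discrete set \(X\), using two elementary facts throughout: for every \(U\subseteq X\) the set \(O(U)\) is clopen in \(\beta X\), hence a compact open subspace; and \(C\subseteq O(C)\) for every \(C\subseteq X\), once a point of \(X\) is identified with the principal ultrafilter it generates. I will also use that \(\mcF\) is non-principal (as \(\mcF\in\beta X\setminus X\)) and that \(\beta X\) is compact, hence \(\kappa\)-Lindel\"of, so that \(X\subseteq\up_{\kappa}X\subseteq\beta X\).

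First I would prove that \(\kappa\)-completeness of \(\mcF\) (in the sense of \cref{rmk: measurable def}) implies \(\mcF\in\up_{\kappa}X\); that is, \(\mcF\in Y\) for every \(\kappa\)-Lindel\"of subspace \(X\subseteq Y\subseteq\beta X\). Assume, for a contradiction, that \(\mcF\notin Y\) for some such \(Y\). For each \(\mathcal{G}\in Y\) we have \(\mathcal{G}\ne\mcF\), so there is \(U_{\mathcal{G}}\subseteq X\) with \(U_{\mathcal{G}}\in\mathcal{G}\setminus\mcF\); then \(\mathcal{G}\in O(U_{\mathcal{G}})\) but \(\mcF\notin O(U_{\mathcal{G}})\). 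The clopen sets \(\{O(U_{\mathcal{G}})\cap Y\mid\mathcal{G}\in Y\}\) cover \(Y\), so \(\kappa\)-Lindel\"ofness yields a set \(J\) with \(|J|<\kappa\) and subsets \(U_j\subseteq X\) \((j\in J)\), each \(U_j\notin\mcF\), with \(X\subseteq Y\subseteq\bigcup_{j\in J}O(U_j)\). Since \(x\in O(U_j)\) means \(x\in U_j\) for \(x\in X\), this forces \(X=\bigcup_{j\in J}U_j\), hence \(\bigcap_{j\in J}(X\setminus U_j)=\emptyset\); but each \(X\setminus U_j\) lies in \(\mcF\) and \(|J|<\kappa\), so \(\kappa\)-completeness gives the contradiction \(\emptyset\in\mcF\). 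Thus \(\mcF\in\up_{\kappa}X\).

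Conversely, arguing by contraposition, I would show that if \(\mcF\) is not \(\kappa\)-complete then \(\mcF\notin\up_{\kappa}X\), by exhibiting a \(\kappa\)-Lindel\"of subspace \(Y\) with \(X\subseteq Y\subseteq\beta X\) and \(\mcF\notin Y\). Fix \(\{A_j\mid j\in J\}\subseteq\mcF\) with \(|J|<\kappa\) and \(D\dfn\bigcap_{j\in J}A_j\notin\mcF\), and set \(B_j\dfn X\setminus A_j\). Then \(D\notin\mcF\) and \(B_j\notin\mcF\) for all \(j\in J\), while \(\{D\}\cup\{B_j\mid j\in J\}\) covers \(X\) (indeed \(\bigcup_{j}B_j=X\setminus D\)) and has fewer than \(\kappa\) members. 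I claim \(Y\dfn O(D)\cup\bigcup_{j\in J}O(B_j)\) works: \(X\subseteq Y\) since \(C\subseteq O(C)\) for each member \(C\) of that covering; \(\mcF\notin Y\) since \(D\notin\mcF\) and each \(B_j\notin\mcF\); and \(Y\) is a union of fewer than \(\kappa\) compact subspaces, hence \(\kappa\)-Lindel\"of (cover each compact piece by finitely many members of a given open covering; the resulting subcovering has cardinality \(<\kappa\) because \(\kappa\) is infinite). Therefore \(\mcF\notin\up_{\kappa}X\), which completes the equivalence.

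I expect the main obstacle to be precisely the construction in the converse direction. The naive candidate \(\bigcup_{j\in J}O(B_j)\) does not suffice, since it omits exactly the points of \(D=\bigcap_{j\in J}A_j\), and reinstating these as a discrete subspace would destroy \(\kappa\)-Lindel\"ofness whenever \(|D|\ge\kappa\). The remedy is to adjoin the single additional compact piece \(O(D)\), which is legitimate precisely because the failure of \(\kappa\)-completeness of \(\mcF\) is, by definition, the statement \(D\notin\mcF\), so that \(O(D)\) does not return \(\mcF\) to \(Y\). Together with \cref{cor: kappa lindify}, this description of \(\up_{\kappa}X\) will translate the functorial \(\kappa\)-Lindel\"ofifiability of a discrete space into a statement about \(\kappa\)-complete ultrafilters.
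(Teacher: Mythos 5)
Your argument is correct and follows essentially the same route as the paper: the forward direction separates \(\mcF\) from a \(\kappa\)-Lindel\"of \(Y\supseteq X\) by basic clopen sets \(O(U_{\mcG})\) with \(U_\mcG\in\mcG\setminus\mcF\) and intersects the complements, and the converse direction builds a witness \(Y\) as a union of fewer than \(\kappa\) compact pieces of the form \(O(X\setminus A_j)\). The one substantive difference is the point you flag yourself: the paper's witness in the converse direction is just \(\bigcup_{F\in\mcF_0}\overline{X\setminus F}\), which omits the points of \(D=\bigcap\mcF_0\) and hence need not contain \(X\) as the definition of \(\up_{\kappa}X\) requires; your adjunction of the extra compact piece \(O(D)\) (harmless precisely because \(D\notin\mcF\)) repairs this, so your version is in fact the more careful of the two.
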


\begin{proof}
  First, we prove necessity.
  Assume that there exists \(\mcF_0\subset \mcF\) such that \(|\mcF_0| < \kappa\) and that \(\bigcap\mcF_0\not\in \mcF\).
  Write
  \[Y\dfn \bigcup\left\{ \overline{X\setminus F} \subset \beta X\,\middle|\, F\in \mcF_0\right\},\]
  where the closure is taken as a subset of \(\beta X\).
  Since \(\mcF\) is an ultrafilter over \(X\), for any \(F\in \mcF_0\), it holds that \(X\setminus F\not\in\mcF\).
  This implies that for any \(F\in \mcF_0\), \(\mcF\not\in\overline{X\setminus F}\).
  In particular, it holds that \(\mcF\not\in Y\).
  Since for each \(F\in \mcF_0\), \(\overline{X\setminus F}\subset \beta X\) is compact, and \(|\mcF_0| < \kappa\), we conclude that \(Y\) is \(\kappa\)-Lindel\"of.
  Thus, it holds that \(\mcF\not\in\up_{\kappa}X\).
  This completes the proof of necessity.

  Next, we prove sufficiency.
  Assume that there exists a \(\kappa\)-Lindel\"of subspace \(X\subset Y\subset \beta X\) such that \(\mcF\not\in Y\).
  For any element \(\mcG\in Y\), there exist open subsets \(\mcF\in U_{\mcG}\subset \beta X\) and \(\mcG\in V_{\mcG}\subset \beta X\) such that \(U_{\mcG}\cap V_{\mcG} = \emptyset\).
  Then, it holds that \(X\cap V_{\mcG}\not\in \mcF\).
  Since \(\mcF\) is an ultrafilter over \(X\), it holds that \(X\setminus V_{\mcG}\in \mcF\).
  Since \(Y\) is \(\kappa\)-Lindel\"of, there exists a subset \(Y_0\subset Y\) such that \(|Y_0|<\kappa\), and \(Y \subset \bigcup_{\mcG\in Y_0}V_{\mcG}\).
  Then, it holds that
  \[
    \bigcap_{\mcG\in Y_0}X\setminus V_{\mcG}
    \,=\, X\setminus \bigcup_{\mcG\in Y_0}V_{\mcG}
    \,\subset\, X\setminus Y \,=\, \emptyset.
  \]
  This implies that \(\mcF\) is not \(\kappa\)-complete.
  This completes the proof of \cref{lem: p in up kappa and completeness}.
\end{proof}

\begin{corollary}\label{lem: kappa Lind discrete}
  Let \(\kappa\) be an uncountable cardinal and \(X\) a discrete space.
  Then the following assertions hold:
  \begin{assertion}
    \item \label{ass: X is nonmeas lem: kappa Lind discrete}
    Assume that any cardinal less than or equal to \(|X|\) is non-measurable.
    Then, it holds that \(\up_{\kappa}X = X\).
    In particular, \(X\) is functorial \(\kappa\)-Lindel\"ofifiable if and only if \(|X| < \kappa\).
    \item \label{ass: kappa Lind equal X lem: kappa Lind discrete}
    Assume that any cardinal less than or equal to \(\kappa\) is non-measurable.
    Then, it holds that \(\up_{\kappa}X = \up X\).
    In particular, \(X\) is functorial \(\kappa\)-Lindel\"ofifiable if and only if \(\up X\) is \(\kappa\)-Lindel\"of.
  \end{assertion}
\end{corollary}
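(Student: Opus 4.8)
The plan is to reduce both parts of \cref{lem: kappa Lind discrete} to \cref{lem: p in up kappa and completeness}, which describes \(\up_{\kappa}X\setminus X\) as the set of \(\kappa\)-complete nonprincipal ultrafilters on \(X\), and then to feed in one classical set-theoretic fact: for a countably complete (i.e., \(\aleph_1\)-complete) nonprincipal ultrafilter \(\mcF\) on a set \(X\), the least cardinality of a subfamily \(\mcF_0\subset\mcF\) with \(\bigcap\mcF_0\notin\mcF\) --- call it the \emph{completeness degree} \(\mu\) of \(\mcF\) --- is a measurable cardinal, and \(\aleph_1\le\mu\le|X|\). (One transports \(\mcF\) along the map \(X\to\mu\) that reads off the first coordinate along which completeness fails, obtaining a \(\mu\)-complete nonprincipal ultrafilter on \(\mu\); then \(\mu\) is measurable by the definition recalled in \cref{rmk: measurable def}. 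The bound \(\mu\le|X|\) comes from the family \(\{X\setminus\{x\}\}_{x\in X}\), which lies in \(\mcF\) since \(\mcF\) is nonprincipal and has empty intersection.) By definition of \(\mu\), the ultrafilter \(\mcF\) is \(\mu\)-complete, hence \(\lambda\)-complete for every cardinal \(\lambda\le\mu\). I shall also use, with no extra work, that \(X\subset\up_{\kappa}X\) always, that \(\up_{\aleph_1}X=\up X\) (\cref{rem: kappa Lindify deg remarks}), and that, since \(\kappa\) is uncountable, every \(\kappa\)-complete ultrafilter is \(\aleph_1\)-complete, so \cref{lem: p in up kappa and completeness} already yields \(\up_{\kappa}X\subset\up_{\aleph_1}X=\up X\). (We may assume \(X\) infinite; if \(X\) is finite then \(\beta X=X\) and everything is trivial.)

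For \cref{ass: X is nonmeas lem: kappa Lind discrete}, suppose \(\mcF\in\up_{\kappa}X\setminus X\). By \cref{lem: p in up kappa and completeness}, \(\mcF\) is a \(\kappa\)-complete nonprincipal ultrafilter on \(X\), hence countably complete, so its completeness degree \(\mu\) is a measurable cardinal with \(\kappa\le\mu\le|X|\), contradicting the hypothesis that every cardinal \(\le|X|\) is non-measurable. Therefore \(\up_{\kappa}X=X\). (Alternatively, the hypothesis makes \(X\) realcompact via \cref{rmk: measurable def}, whence \(\up X=X\) and \(X\subset\up_{\kappa}X\subset\up X=X\).) For the displayed equivalence, \cref{cor: kappa lindify} reduces functorial \(\kappa\)-Lindel\"ofifiability of \(X\) to \(\kappa\)-Lindel\"ofness of \(\up_{\kappa}X=X\); and a discrete space is \(\kappa\)-Lindel\"of precisely when its cardinality is \(<\kappa\), since the only subcover of the cover by singletons is that cover itself, while conversely a cover of a set of size \(<\kappa\) admits a subcover of size \(\le|X|<\kappa\) by choosing one member through each point.

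For \cref{ass: kappa Lind equal X lem: kappa Lind discrete}, the inclusion \(\up_{\kappa}X\subset\up X\) was noted above, so it suffices to prove \(\up X\subset\up_{\kappa}X\), and since \(X\subset\up_{\kappa}X\) we only need \(\up X\setminus X\subset\up_{\kappa}X\). Let \(\mcF\in\up X\setminus X=\up_{\aleph_1}X\setminus X\); by \cref{lem: p in up kappa and completeness} applied with \(\aleph_1\) in place of \(\kappa\), \(\mcF\) is a countably complete nonprincipal ultrafilter on \(X\), so its completeness degree \(\mu\) is measurable and \(\mu\le|X|\). Since every cardinal \(\le\kappa\) is non-measurable, \(\mu>\kappa\); hence \(\mcF\) is \(\kappa\)-complete (being \(\mu\)-complete with \(\kappa<\mu\)), and therefore \(\mcF\in\up_{\kappa}X\) by \cref{lem: p in up kappa and completeness} again. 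Thus \(\up_{\kappa}X=\up X\), and the stated equivalence then follows at once from \cref{cor: kappa lindify}.

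The only ingredient beyond bookkeeping with the lemmas already established is the classical fact that the completeness degree of a countably complete nonprincipal ultrafilter is a measurable cardinal; that is where the genuine content lies. I expect the only delicate point to be quoting this fact in the \(\mu\le|X|\) form, as opposed to the \(\mu<|X|\) form implicit in the realcompactness criterion of \cref{rmk: measurable def} --- which is precisely the form in which the hypotheses of \cref{lem: kappa Lind discrete} are phrased.
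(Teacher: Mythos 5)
Your proof is correct and follows essentially the same route as the paper: part \ref{ass: X is nonmeas lem: kappa Lind discrete} via realcompactness of discrete spaces of non-measurable cardinality (the paper cites Gillman--Jerison, Theorem 12.2, together with the chain of inclusions in \cref{rem: kappa Lindify deg remarks}), and part \ref{ass: kappa Lind equal X lem: kappa Lind discrete} via \cref{lem: p in up kappa and completeness} combined with the classical fact that the completeness degree of a countably complete nonprincipal ultrafilter is a measurable cardinal (the paper cites Jech, Lemma 10.2, plus exactly the pushforward-to-a-quotient observation you sketch). You merely spell out the details the paper leaves as citations, and your closing remark about the \(\mu\le|X|\) versus \(\mu<|X|\) phrasing is a fair catch.
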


\begin{proof}
  \Cref{ass: X is nonmeas lem: kappa Lind discrete} follows immediately from \cite[Theorem 12.2]{GJ-rings-of-conti} and \cref{rem: kappa Lindify deg remarks}.
  \Cref{ass: kappa Lind equal X lem: kappa Lind discrete} follows immediately from \cite[Lemma 10.2]{Jech-set-theory}, \cref{lem: p in up kappa and completeness}, and the fact that any \(\kappa\)-complete ultrafilter over a subset of \(X\) induces a \(\kappa\)-complete ultrafilter over \(X\).
\end{proof}

\begin{corollary}\label{cor: exists measurable}
  If a discrete space \(X\) admits a functorial \(|X|\)-Lindel\"ofification, then \(|X|\) is measurable.
\end{corollary}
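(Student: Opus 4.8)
The plan is to establish the contrapositive. Setting \(\kappa\dfn|X|\), I would assume that \(\kappa\) is not measurable and deduce that \(X\) does not admit a functorial \(\kappa\)-Lindel\"ofification; by \cref{cor: kappa lindify} it suffices to show that \(\up_{\kappa}X\) fails to be \(\kappa\)-Lindel\"of. If \(\kappa\) is finite this is immediate: then \(X\) is compact, so \(\beta X=X\) and hence \(\up_{\kappa}X=X\), while a discrete space with \(\kappa\) points is not \(\kappa\)-Lindel\"of, the cover by singletons having no subcover of cardinality \(<\kappa\). The case \(\kappa=\aleph_0\) behaves differently --- every filter is \(\aleph_0\)-complete, so \(\up_{\aleph_0}X=\beta X\) is compact --- and the substantive argument below is for uncountable \(\kappa\).

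So suppose \(\kappa\) is uncountable and non-measurable. Unwinding the definition of measurability (in the sense of \cref{rmk: measurable def}), this says precisely that there is no \(\kappa\)-complete nonprincipal ultrafilter on a set of cardinality \(\kappa\), hence none on \(X\). By \cref{lem: p in up kappa and completeness}, the points of \(\up_{\kappa}X\) lying in \(\beta X\setminus X\) are exactly the \(\kappa\)-complete nonprincipal ultrafilters on \(X\); since there are none, \(\up_{\kappa}X=X\). As in the finite case, the discrete space \(X\) of cardinality \(\kappa\) is not \(\kappa\)-Lindel\"of, so \(\up_{\kappa}X\) is not \(\kappa\)-Lindel\"of either, and \cref{cor: kappa lindify} yields that \(X\) is not functorial \(\kappa\)-Lindel\"ofifiable. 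This proves the contrapositive.

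Given \cref{lem: p in up kappa and completeness}, the argument is short and I do not anticipate a genuine obstacle; the one point needing care is that one should resist invoking the first assertion of \cref{lem: kappa Lind discrete} directly, since its hypothesis asks that \emph{every} cardinal \(\le|X|\) be non-measurable --- strictly stronger than the non-measurability of \(|X|\) by itself (for instance, \(|X|\) could be the successor of a measurable cardinal, hence non-measurable yet exceeding a measurable cardinal). Routing the proof through \cref{lem: p in up kappa and completeness} avoids this: that theorem characterizes membership in \(\up_{\kappa}X\) purely via \(\kappa\)-completeness of ultrafilters on \(X\), and the existence of such an ultrafilter is literally the defining condition for the measurability of \(\kappa=|X|\).
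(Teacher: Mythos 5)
Your proof is correct and is essentially the paper's argument run in the contrapositive direction: the paper argues directly that functorial \(|X|\)-Lindel\"ofifiability forces \(X\subsetneq \up_{|X|}X\) (since a discrete space of cardinality \(|X|\) is not \(|X|\)-Lindel\"of) and then reads off a nonprincipal \(|X|\)-complete ultrafilter from \cref{lem: p in up kappa and completeness}, which is exactly your chain of implications reversed, and your remark about not invoking \cref{lem: kappa Lind discrete} \ref{ass: X is nonmeas lem: kappa Lind discrete} is well taken. One point you flag but do not quite press: for \(|X|=\aleph_0\) the contrapositive genuinely fails --- every nonprincipal ultrafilter is \(\aleph_0\)-complete, so \(\up_{\aleph_0}X=\beta X\) is compact and \(X\) \emph{is} functorial \(\aleph_0\)-Lindel\"ofifiable while \(\aleph_0\) is not measurable --- so the corollary (and the paper's own proof, whose final step "in particular \(|X|\) is measurable" needs uncountability) must be read with \(|X|\) uncountable; your restriction of the substantive argument to uncountable \(\kappa\) is therefore not a cosmetic choice but a necessary one.
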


\begin{proof}
  Since \(X\) is not \(|X|\)-Lindel\"of, and \(X\) admits a functorial \(|X|\)-Lindel\"ofification, it holds that \(X\subsetneq \up_{|X|}X\).
  Thus, by \cref{lem: p in up kappa and completeness}, there exists a non-trivial \(|X|\)-complete ultrafilter over \(X\).
  In particular, \(|X|\) is measurable.
  This completes the proof of \cref{cor: exists measurable}.
\end{proof}

Next, we consider the converse implication of \cref{cor: exists measurable}.

\begin{definition}\label{def: strg cpt}
  We shall say that a cardinal \(\kappa\) is \textbf{strongly compact} if for any set \(S\), every \(\kappa\)-complete filter over \(S\) can be extended to a \(\kappa\)-complete ultrafilter over \(S\) (cf. \cite[Chapter 1, Proposition 4.1]{Kanamori-the-higher-infinite}).
\end{definition}

\begin{theorem}\label{thm: meas kappa Lind}
  Let \(\kappa\) be a cardinal.
  Then, the following assertions are equivalent:
  \begin{assertion}
    \item \label{ass: str cpt thm: meas kappa Lind}
    \(\kappa\) is strongly compact.
    \item \label{ass: admits kappa Lind thm: meas kappa Lind}
    Any discrete space admits a functorial \(\kappa\)-Lindel\"ofification.
  \end{assertion}
\end{theorem}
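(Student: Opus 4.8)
The plan is to translate the statement, via \cref{cor: kappa lindify} and \cref{lem: p in up kappa and completeness}, into a purely combinatorial assertion about ultrafilters and then to recognise it as the defining property of strong compactness from \cref{def: strg cpt}. We may assume \(\kappa\) is infinite: for \(\kappa=\aleph_0\) both \cref{ass: str cpt thm: meas kappa Lind} and \cref{ass: admits kappa Lind thm: meas kappa Lind} hold in \(\mathsf{ZFC}\), the former being the ultrafilter lemma and the latter following from \cref{cor: kappa lindify} together with the facts that \(\up_{\aleph_0}X=\beta X\) and that \(\beta X\) is compact. Fix a discrete space \(X\). By \cref{lem: p in up kappa and completeness}, \(\up_{\kappa}X\) is exactly the subspace of \(\beta X\) whose points are the \(\kappa\)-complete ultrafilters on \(X\) (the principal ultrafilters being \(\kappa\)-complete for every \(\kappa\)). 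Since \(\beta X\) has a basis of clopen sets \(O(U)\) (\(U\subset X\)), so does \(\up_{\kappa}X\); refining an arbitrary open covering to one by sets \(O(U)\cap\up_{\kappa}X\) and then lifting a subcovering back to the original covering, we see that \(\up_{\kappa}X\) is \(\kappa\)-Lindel\"of if and only if every covering of \(\up_{\kappa}X\) by such basic clopen sets admits a subcovering of cardinality \(<\kappa\). By \cref{cor: kappa lindify}, this holds if and only if \(X\) is functorial \(\kappa\)-Lindel\"ofifiable.

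Next I would set up a dictionary between these clopen coverings and families of subsets of \(X\). Given a family \(\{O(U_i)\cap\up_{\kappa}X\}_{i\in I}\), put \(A_i\dfn X\setminus U_i\). Using that every point of \(\up_{\kappa}X\) is a \(\kappa\)-complete ultrafilter and that every point of \(X\) yields a principal---hence \(\kappa\)-complete---ultrafilter, one checks the following two facts: for each \(J\subset I\) with \(|J|<\kappa\), the subfamily \(\{O(U_i)\}_{i\in J}\) covers \(\up_{\kappa}X\) if and only if \(\bigcap_{i\in J}A_i=\emptyset\) (the nontrivial implication uses that a \(\kappa\)-complete ultrafilter containing fewer than \(\kappa\) sets contains their intersection); and the full family \(\{O(U_i)\}_{i\in I}\) covers \(\up_{\kappa}X\) if and only if no \(\kappa\)-complete ultrafilter on \(X\) contains all of the \(A_i\). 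Combining these, a family \(\{O(U_i)\cap\up_{\kappa}X\}_{i\in I}\) is a covering of \(\up_{\kappa}X\) with no subcovering of cardinality \(<\kappa\) precisely when \(\{A_i\}_{i\in I}\) is a family with the ``\(<\kappa\)-intersection property'' (every subfamily of cardinality \(<\kappa\) has nonempty intersection) that extends to no \(\kappa\)-complete ultrafilter on \(X\).

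Combining the two preceding steps, \cref{ass: admits kappa Lind thm: meas kappa Lind} holds if and only if no set carries a family with the \(<\kappa\)-intersection property that fails to extend to a \(\kappa\)-complete ultrafilter. On the other hand, every \(\kappa\)-complete filter trivially has the \(<\kappa\)-intersection property, and conversely a family with the \(<\kappa\)-intersection property generates a proper \(\kappa\)-complete filter, with a \(\kappa\)-complete ultrafilter extending the family if and only if it extends the generated filter (here one invokes that a strongly compact cardinal is regular; note that if \(\kappa\) is singular it is not strongly compact, so there is nothing to prove on this side). Hence the displayed condition is precisely \cref{def: strg cpt}---every \(\kappa\)-complete filter over every set extends to a \(\kappa\)-complete ultrafilter---i.e.\ \cref{ass: str cpt thm: meas kappa Lind}. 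This establishes the desired equivalence.

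The step I expect to be the main obstacle is the combinatorial dictionary of the second paragraph, in particular the verification that, for \(J\) of cardinality \(<\kappa\), the sets \(\{O(U_i)\cap\up_{\kappa}X\}_{i\in J}\) cover \(\up_{\kappa}X\) if and only if \(\bigcap_{i\in J}A_i=\emptyset\): this is precisely where the \(\kappa\)-completeness of the ultrafilters lying in \(\up_{\kappa}X\) is genuinely used, since an arbitrary ultrafilter need not be closed under intersections of fewer than \(\kappa\) of its members. A secondary point requiring care is the passage between the filter formulation of strong compactness in \cref{def: strg cpt} and the intersection-property formulation used above, which is where (regularity of) \(\kappa\) enters.
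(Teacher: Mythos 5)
Your argument is correct and follows essentially the same route as the paper: both reduce, via \cref{cor: kappa lindify} and \cref{lem: p in up kappa and completeness}, to coverings of \(\up_{\kappa}X\) by the basic clopen sets \(O(U)\), pass to the complements \(X\setminus U\), and identify \(\kappa\)-Lindel\"ofness of \(\up_{\kappa}X\) with the filter-extension property of \cref{def: strg cpt}. The only differences are presentational --- the paper runs the two implications separately (the first by contradiction via a \(\kappa\)-complete filter base, the second via closures of filter elements in \(\up_{\kappa}X\)) where you set up a single two-way dictionary --- and you are in fact more explicit than the paper about the point where regularity of a strongly compact cardinal is needed to see that a family with the \(<\kappa\)-intersection property generates a \(\kappa\)-complete filter.
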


\begin{proof}
  First, we prove the implication ``\ref{ass: str cpt thm: meas kappa Lind} \(\Rightarrow\) \ref{ass: admits kappa Lind thm: meas kappa Lind}''.
  Assume that \(\kappa\) is strongly compact.
  Let \(X\) be a discrete space.
  By \cref{cor: kappa lindify}, to prove that \(X\) admits a functorial \(\kappa\)-Lindel\"ofification, it sufficies to prove that \(\up_{\kappa}X\) is \(\kappa\)-Lindel\"of.
  For each family of subsets \(\mcA\) of \(X\), consider the following condition
  \begin{itemize}
    \item[(\(\dagger_{\mcA}\))] 
    For any \(\kappa\)-complete ultrafilter \(\mcF\) over \(X\), there exists an element \(U\in \mcA\) such that \(U\in \mcF\).
  \end{itemize}
  Then, by \cref{lem: p in up kappa and completeness}, \(\mcA\) satisfies condition (\(\dagger_{\mcA}\)) if and only if \(\up_{\kappa}X\subset \bigcup_{A\in \mcA}O(A)\).
  Hence, to prove that \(\up_{\kappa}X\) is \(\kappa\)-Lindel\"of, it sufficies to prove that for any family of subsets \(\mcU\) of \(X\) that satisfies condition (\(\dagger_{\mcU}\)), there exists a subset \(\mcU'\subset \mcU\) of cardinality less than \(\kappa\) such that \(\mcU'\) satisfies condition (\(\dagger_{\mcU_0}\)).

  Let \(\mcU\) be a family of subsets of \(X\) that satisfies condition (\(\dagger_{\mcU}\)).
  Write
  \[
    \mcF\dfn \left\{ {\textstyle \bigcap_{U\in \mcU'}X\setminus U} \,\middle|\, \mcU'\subset \mcU, |\mcU'| < \kappa\right\}.
  \]
  Assume that any subset \(\mcU'\subset \mcU\) of cardinality less than \(\kappa\) does not satisfy condition (\(\dagger_{\mcU'}\)).
  Let \(\mcU'\subset \mcU\) be a subset such that \(|\mcU'| < \kappa\).
  Since \(\mcU'\) does not satisfy condition (\(\dagger_{\mcU'}\)), it follows from \cref{lem: p in up kappa and completeness} that there exists a \(\kappa\)-complete ultrafilter \(\mcF_{\mcU'}\in \up_{\kappa}X\setminus \bigcup_{U\in\mcU'}O(U)\) over \(X\).
  Then, it holds that \(\{X\setminus U\,|\, U\in \mcU'\}\subset \mcF_{\mcU'}\).
  Since \(|\mcU'| < \kappa\), and \(\mcF_{\mcU'}\) is \(\kappa\)-complete, this implies that \(\bigcap_{U\in\mcU'}X\setminus U\neq \emptyset\).
  In particular, \(\mcF\) is a \(\kappa\)-complete filter base over \(X\).
  Since \(\kappa\) is strongly compact, there exists a \(\kappa\)-complete ultrafilter \(\mcF^{\dagger}\) such that \(\mcF\subset \mcF^{\dagger}\).
  Since \(\mcU\) satisfies condition (\(\dagger_{\mcU}\)), this implies that \[\mcF^{\dagger}\in \beta X \setminus \bigcup\{O(U)\,|\,U\in \mcU\}\subset \beta X\setminus \up_{\kappa}X,\]
  in contradiction to the fact that \(\mcF^{\dagger}\) is \(\kappa\)-complete.
  Thus, there exists a subset \(\mcU_0\subset \mcU\) of cardinality less than \(\kappa\) such that \(\mcU_0\) satisfies condition (\(\dagger_{\mcU_0}\)).
  This completes the proof of the implication ``\ref{ass: str cpt thm: meas kappa Lind} \(\Rightarrow\) \ref{ass: admits kappa Lind thm: meas kappa Lind}''.

  Next, we prove the implication ``\ref{ass: admits kappa Lind thm: meas kappa Lind} \(\Rightarrow\) \ref{ass: str cpt thm: meas kappa Lind}''.
  Assume that any discrete space admits a functorial \(\kappa\)-Lindel\"ofification.
  Let \(X\) be a set and \(\mcF\) a \(\kappa\)-complete filter over \(X\).
  We regard \(X\) as a discrete topological space.
  Then, \(X\) admits a functorial \(\kappa\)-Lindel\"ofification.
  Hence, by \cref{cor: kappa lindify}, \(\up_{\kappa}X\) is \(\kappa\)-Lindel\"of.

  For any \(F\in \mcF\), write \(\tilde{F}\dfn \overline{F}\cap \up_{\kappa}X\subset \up_{\kappa}X\), where the closure \(\overline{F}\) is taken as a subset of \(\beta X\).
  Since \(\mcF\) is \(\kappa\)-complete, for any \(\mcF_0\subset \mcF\) such that \(|\mcF_0| < \kappa\), it holds that \(\bigcap_{F\in \mcF_0}\tilde{F} \supset \bigcap\mcF_0 \in \mcF\), i.e., \(\bigcap_{F\in \mcF_0}\tilde{F}\neq \emptyset\).
  Since \(\up_{\kappa}X\) is \(\kappa\)-Lindel\"of, it holds that \(\bigcap_{F\in \mcF}\tilde{F} \neq \emptyset\).
  Let \(\mcF^{\dagger}\in \bigcap_{F\in \mcF}\tilde{F}\) be an element.
  Then, it holds that \(\mcF\subset \mcF^{\dagger}\).
  Moreover, by \cref{lem: p in up kappa and completeness}, \(\mcF^{\dagger}\) is a \(\kappa\)-complete ultrafilter over \(X\).
  Thus, we conclude that \(\kappa\) is strongly compact.
  This completes the proof of \cref{thm: meas kappa Lind}.
\end{proof}

\begin{corollary}\label{cor: str cpt Lind}
  If the cardinality of a discrete space \(X\) is strongly compact, then \(X\) admits a functorial \(|X|\)-Lindel\"ofification.
\end{corollary}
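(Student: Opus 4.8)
The plan is to deduce \cref{cor: str cpt Lind} as an immediate specialization of \cref{thm: meas kappa Lind}. Set $\kappa\dfn |X|$. By hypothesis $\kappa$ is strongly compact, so assertion \ref{ass: str cpt thm: meas kappa Lind} of \cref{thm: meas kappa Lind} holds for this particular $\kappa$; hence, by the equivalence ``\ref{ass: str cpt thm: meas kappa Lind} $\Leftrightarrow$ \ref{ass: admits kappa Lind thm: meas kappa Lind}'' proved there, assertion \ref{ass: admits kappa Lind thm: meas kappa Lind} holds as well: every discrete space admits a functorial $|X|$-Lindel\"ofification. In particular, the discrete space $X$ itself admits a functorial $|X|$-Lindel\"ofification, which is exactly the claim.

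If one prefers an argument that does not route through the biconditional but only through its forward direction, one can invoke \cref{cor: kappa lindify} instead: it suffices to show that $\up_{|X|}X$ is $|X|$-Lindel\"of, and this is precisely the content of the ``\ref{ass: str cpt thm: meas kappa Lind} $\Rightarrow$ \ref{ass: admits kappa Lind thm: meas kappa Lind}'' half of the proof of \cref{thm: meas kappa Lind} applied to $X$, where strong compactness of $|X|$ is used to extend the $|X|$-complete filter base $\mcF$ built from a hypothetical ``bad'' cover to an $|X|$-complete ultrafilter lying in $\beta X\setminus \up_{|X|}X$, contradicting \cref{lem: p in up kappa and completeness}.

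Since $X$ is a discrete space whose cardinal parameter is exactly $|X|$, and \cref{thm: meas kappa Lind} is stated for an arbitrary cardinal $\kappa$, no genuine obstacle arises; the corollary is a direct instantiation. The only point worth remarking is the contrast with \cref{cor: exists measurable}: that result provides a converse-type constraint, namely that functorial $|X|$-Lindel\"ofifiability of a discrete space forces $|X|$ to be measurable, so \cref{cor: str cpt Lind} and \cref{cor: exists measurable} together sandwich the functorial $|X|$-Lindel\"ofifiability of a discrete space between measurability and strong compactness of $|X|$.
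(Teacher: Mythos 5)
Your proposal is correct and matches the paper's own proof, which likewise deduces \cref{cor: str cpt Lind} as an immediate instantiation of the implication ``\ref{ass: str cpt thm: meas kappa Lind} \(\Rightarrow\) \ref{ass: admits kappa Lind thm: meas kappa Lind}'' of \cref{thm: meas kappa Lind} at \(\kappa = |X|\). The additional remarks (the alternative route via \cref{cor: kappa lindify} and the sandwich with \cref{cor: exists measurable}) are accurate but not needed.
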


\begin{proof}
  \Cref{cor: str cpt Lind} follows immediately from \cref{thm: meas kappa Lind}.
\end{proof}


\Section{Examples}

In this final section, we give some examples of functorial \(\kappa\)-Lindel\"ofifiable spaces.
For any topological space \(X\) and any closed subspace \(A\subset X\), we shall write \(X/A\) for the quotient topological space of \(X\) by the equivalent relation \((A\times A)\cup \Delta_X \subset X\times X\), where \(\Delta_X\subset X\times X\) is the diagonal subset.
Since we assume that \(X\) is completely regular, \(X/A\) is Hausdorff.

We shall introduce the following cardinal functions:

\begin{definition}
  Let \(X\) be a topological space.
  \begin{enumerate}
    \item
    The \textbf{Lindel\"of degree} of \(X\), denoted \(L(X)\), is defined as the smallest infinite cardinal \(\kappa\) such that every open cover of \(X\) has a subcollection of cardinality \(\leq \kappa\) which covers \(X\) (cf. \cite[Chapter 1, {\S} 3]{Kunen-Handbook-set-top}).
    \item
    We shall write
    \[
      oL(X)\dfn \min\left\{ \kappa \,\middle|\,X = \up_{\kappa^+}X\,
      \right\} + \omega_0.
    \]
    We shall way that \(oL(X)\) is the \textbf{outer Lindel\"of degree} of \(X\).
  \end{enumerate}
\end{definition}

\begin{remark}
  Let \(X\) be a topological space.
  \begin{enumerate}
    \item
    One can verify easily that
    \[L(X) =  \min\left\{ \kappa \,\middle|\,\text{\(X\) is \(\kappa^+\)-Lindel\"of}\,
    \right\} + \omega_0.\]
    In particular, for a cardinal \(\kappa\), \(X\) is \(\kappa\)-Lindel\"of if and only if \(L(X) < \kappa\).
    \item
    It follows immediately that \(\aleph_0\leq oL(X) \leq L(X)\leq \max \{|X|, \aleph_0\}\).
    \item
    By \cref{rmk: structure of realcompactification}, \(X\) is realcompact if and only if \(oL(X) = \aleph_0\).
    Moreover, by \cref{rmk: measurable def}, if \(X\) is discrete, then \(|X|\) is non-measurable if and only if \(oL(X) = \aleph_0\).
    \item
    For any discrete space \(X\) such that \(2^{\aleph_0} < |X|\), if \(|X|\) is non-measurable, then it holds that \(2^{\aleph_0} = 2^{oL(X)\chi(X)} < |X|\), where \(\chi(X)\) is the \textit{character} of \(X\) (cf. \cite[Chapter 1, \S 3]{Kunen-Handbook-set-top}).
    Hence, the Arhangel'ski\u{\i}-type inequality for \(oL(-)\) and \(\chi(-)\) does not hold.
  \end{enumerate}
\end{remark}

For any linearly ordered set \((L, \leq)\) and elements \(a,b\in L\) such that \(a < b\), we shall write \([a,b]\dfn \left\{x\in L\,\middle|\, a\leq x \leq b\right\}\subset L\).
We regard \(L\) as a topoplogical space whose topology is generated by \(\left\{ [a,b]\setminus \{a,b\}\,\middle|\, a,b\in L\right\}\).

\begin{lemma}\label{lem: example lem}
  Let \(\kappa\) be an infinite cardinal and \(\alpha\) an infinite ordinal.
  Then, the following assertions hold:
  \begin{assertion}
    \item \label{ass: cf lambda lind lem: example lem}
    It holds that \(L(\alpha) = \cf(\alpha) + \omega_0\).
    Moreover, for any open covering \(\mcU\) of \(\alpha\), if \(|\mcU| < \cf(\alpha)\), then there exists a subset \(\mcV\subset \mcU\) such that \(|\mcV| < \aleph_0\).
    \item \label{ass: regular SC cpt lem: example lem}
    If \(\cf(\alpha) > \omega_0\), then \(\beta\alpha = \alpha + 1\).
    In particular, it holds that \(oL(\alpha) = \cf(\alpha) + \omega_0\).
    \item \label{ass: lambda 0 cpt lem: example lem}
    For any subset \(A\subset \kappa+1\), if \(\kappa\in A\), then \(L(A) < \kappa\).
  \end{assertion}
\end{lemma}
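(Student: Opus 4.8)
To prove the lemma I would handle the three assertions in turn.

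For \cref{ass: cf lambda lind lem: example lem}: if \(\alpha\) is a successor ordinal then it is compact and \(\cf(\alpha)=1\), so \(L(\alpha)=\omega_0=\cf(\alpha)+\omega_0\) and the ``moreover'' clause is vacuous; hence I may assume \(\alpha\) is a limit ordinal, so that \(\cf(\alpha)\) is an infinite regular cardinal with \(\cf(\alpha)+\omega_0=\cf(\alpha)\). For \(L(\alpha)\le\cf(\alpha)\) I would fix a cofinal set \(\{\gamma_\xi\mid\xi<\cf(\alpha)\}\subset\alpha\) and amalgamate the finite subcovers of the compact initial segments \([0,\gamma_\xi]=\gamma_\xi+1\) (which together cover \(\alpha\)) into a subcover of cardinality \(\le\cf(\alpha)\cdot\aleph_0=\cf(\alpha)\); for the reverse inequality I would use the open cover \(\{[0,\gamma)\mid\gamma<\alpha\}\), a subcover of which corresponds to a cofinal subset of \(\alpha\) and so has cardinality \(\ge\cf(\alpha)\). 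For the ``moreover'' clause, suppose \(\mcU\) is an open cover with \(|\mcU|<\cf(\alpha)\) admitting no finite subcover; choosing \(p_{\mcV}\in\alpha\setminus\bigcup\mcV\) for each finite \(\mcV\subset\mcU\), the set of all these \(p_{\mcV}\) has cardinality \(\le|\mcU|<\cf(\alpha)\), hence is bounded by some \(\delta<\alpha\); but \([0,\delta]\) is compact and covered by \(\mcU\), so a finite \(\mcV_0\subset\mcU\) covers \([0,\delta]\ni p_{\mcV_0}\), a contradiction.

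For \cref{ass: regular SC cpt lem: example lem}: assume \(\cf(\alpha)>\omega_0\), so \(\alpha\) is a limit ordinal. Then \(\alpha+1\) is compact Hausdorff and \(\alpha\) is dense in it, hence a Hausdorff compactification of \(\alpha\); to identify it with \(\beta\alpha\) it suffices to show that every bounded continuous \(f:\alpha\to\R\) extends over the point \(\alpha\), i.e.\ that \(\lim_{\xi\to\alpha}f(\xi)\) exists. If it did not, one could pick reals \(a<b\) strictly between \(\liminf_{\xi\to\alpha}f(\xi)\) and \(\limsup_{\xi\to\alpha}f(\xi)\) and recursively build an increasing sequence \(\beta_0<\beta_1<\cdots<\alpha\) with \(f(\beta_{2n})<a\) and \(f(\beta_{2n+1})>b\); since \(\cf(\alpha)>\omega_0\), the ordinal \(\gamma\dfn\sup_n\beta_n\) lies below \(\alpha\), and continuity of \(f\) at \(\gamma\) contradicts the oscillation. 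Once \(\beta\alpha=\alpha+1\) is established, the only subspaces \(Y\) with \(\alpha\subset Y\subset\beta\alpha\) are \(\alpha\) and \(\alpha+1\); since \(\alpha+1\) is compact, \(\up_{\mu^+}\alpha=\alpha\) holds exactly when \(\alpha\) is itself \(\mu^+\)-Lindel\"of, i.e.\ by \cref{ass: cf lambda lind lem: example lem} exactly when \(\cf(\alpha)=L(\alpha)\le\mu\); hence \(\min\{\mu\mid\alpha=\up_{\mu^+}\alpha\}=\cf(\alpha)\) and \(oL(\alpha)=\cf(\alpha)+\omega_0\).

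For \cref{ass: lambda 0 cpt lem: example lem}: let \(A\subset\kappa+1\) with \(\kappa\in A\) and let \(\mcU\) be an open cover of \(A\). Pick \(U_0\in\mcU\) with \(\kappa\in U_0\); since \(\kappa\) is a limit ordinal it is not isolated in \(\kappa+1\), so \((\gamma,\kappa]\cap A\subset U_0\) for some \(\gamma<\kappa\). Then \(A\setminus U_0\subset A\cap[0,\gamma]\), which has cardinality \(\le|\gamma+1|<\kappa\) because \(\kappa\) is an infinite cardinal; choosing one member of \(\mcU\) through each point of \(A\cap[0,\gamma]\) and adjoining \(U_0\) produces a subcover of \(A\) of cardinality \(<\kappa\), so \(A\) is \(\kappa\)-Lindel\"of, i.e.\ \(L(A)<\kappa\). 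I expect the only genuinely delicate step in the lemma to be the function-extension argument for \cref{ass: regular SC cpt lem: example lem}, where the hypothesis \(\cf(\alpha)>\omega_0\) is needed precisely to keep \(\sup_n\beta_n\) below \(\alpha\); all the rest is routine bookkeeping with compactness and cardinalities.
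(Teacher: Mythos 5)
Your proof is correct, and its overall decomposition matches the paper's: the same amalgamation of finite subcovers of compact initial segments for \(L(\alpha)\leq \cf(\alpha)\), the same cofinal-cover argument for the reverse inequality, and essentially the same neighborhood-of-\(\kappa\) argument for the third assertion. You diverge from the paper in two sub-arguments, both legitimately. For the ``moreover'' clause of the first assertion, the paper notes that the complements \(\alpha\setminus W\) are closed, invokes Kunen's lemma that the intersection of fewer than \(\cf(\alpha)\) closed unbounded subsets is still unbounded to produce some \(W\) with \(\alpha\setminus W\) bounded, and then covers the bounded remainder compactly; your witness-picking argument (choose \(p_{\mcV}\notin\bigcup\mcV\) for each finite \(\mcV\subset\mcU\), observe that these witnesses form a set of size less than \(\cf(\alpha)\), bound them by some \(\delta<\alpha\), and apply compactness of \([0,\delta]\)) reaches the same conclusion without the club-filter machinery, is more elementary, and handles the degenerate case \(\cf(\alpha)=\omega_0\) transparently. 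For the second assertion, the paper proves the stronger fact that every continuous \(f:\alpha\to\R\) is eventually constant (via nested dyadic intervals, each stage using uncountable cofinality), whereas you prove only that \(\lim_{\xi\to\alpha}f(\xi)\) exists via an oscillation argument; the weaker conclusion still suffices for the \(C^*\)-extension characterization of \(\beta\alpha\), and your explicit remark that \(\alpha\) must be dense in \(\alpha+1\) (so that \(\alpha+1\) really is a compactification) is a detail the paper leaves implicit. The treatments of \(oL(\alpha)\) and of the third assertion are essentially identical to the paper's.
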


\begin{proof}
  First, we prove \cref{ass: cf lambda lind lem: example lem}.
  Let \(f:\cf(\alpha)\to \alpha\) be a map such that \(\im(f)\subset \alpha\) is unbounded and \(\mcU\) an open covering of \(\alpha\).
  Since \([0, -]\subset \alpha\) is compact, for any \(\gamma < \cf(\alpha)\), there exists a finite subcover \(\mcV_{\gamma}\subset \mcU\) such that \([0, f(\gamma)]\subset \bigcup \mcV_{\gamma}\).
  Then, \(\bigcup_{\gamma < \cf(\alpha)}\mcV_{\gamma}\subset \mcU\) is a subcover of \(\mcU\) whose cardinality is less than or equal to \(\cf(\alpha)\).
  This imples that \(\alpha\) is \(\cf(\alpha)^+\)-Lindel\"of.

  Assume that there exists a subset \(\mcV_0\) of the open covering \(\left\{ [0,\gamma]\,\middle|\,\gamma <\alpha\right\}\) of \(\alpha\) such that \(\kappa_0\dfn |\mcV_0| < \cf(\alpha)\) and that \(\alpha = \bigcup\mcV_0\).
  Let \(f_0:\kappa_0\tosim \mcV_0\) be a bijection.
  Then, since \(\alpha = \bigcup\mcV_0\), the map \(\kappa_0\to \alpha, \gamma \mapsto \max f_0(\gamma)\) is unbounded.
  This contradicts to our assumption that \(\kappa_0 < \cf(\alpha)\).
  Thus, \(\alpha\) is not \(\cf(\alpha)\)-Lindel\"of.
  In particular, it holds that \(L(\alpha) = \cf(\alpha)\).

  Let \(\mcW\) be an open covering of \(\alpha\) such that \(|\mcW| < \cf(\alpha)\).
  If for any \(W\in \mcW\), \(\alpha\setminus W\subset \alpha\) is unbounded, then, by \cite[Chapter 2, Lemma 6.8 (a)]{Kunen-set-intro}, \(\bigcap_{W\in \mcW}(\alpha \setminus W)\subset \alpha\) is also unbounded.
  This contradicts to our assumption that \(\alpha = \bigcup \mcW\).
  This implies that there exists \(W\in \mcW\) such that \(\alpha \setminus W\) is bounded.
  Since \([0,\sup(\alpha \setminus W)] \subset \bigcup \mcW\), and \([0,\sup(\alpha \setminus W)]\) is compact, there exists a finite subset \(\mcW_0\subset \mcW\) such that \([0,\sup(\alpha\setminus W)] \subset \bigcup \mcW_0\).
  Thus, it holds that \(\alpha = W \cup \bigcup\mcW_0\).
  This completes the proof of \cref{ass: cf lambda lind lem: example lem}.

  Next, we prove \cref{ass: regular SC cpt lem: example lem}.
  By \cref{ass: cf lambda lind lem: example lem}, \(\alpha\) is countably compact.
  Hence, \(\alpha\) is pseudocompact.
  Let \(f:\alpha\to \R\) be a continuous map.
  Then, \(f\) is bounded.
  To prove that \(\beta \alpha = \alpha + 1\), it sufficies to prove that there exists \(\gamma < \alpha\) such that for any \(\gamma < \gamma_0 < \alpha\), \(f(\gamma) = f(\gamma_0)\).
  Hence, we may assume without loss of generality that \(\im(f)\subset [0,1]\).
  Since \(\cf(\alpha) > \aleph_0\), it follows from \cite[Chapter 2, Lemma 6.8 (a)]{Kunen-set-intro} that for any \(n\in \N\), there exists a unique \(0\leq k(n) < 2^n\) such that \(f^{-1}([k(n)/2^n, (k(n)+1)/2^n))\subset \alpha\) is unbounded.
  Write \(a\) for the unique element \(\bigcap_{n\in \N}[k(n)/2^n, (k(n)+1)/2^n)\).
  Since \(\cf(\alpha) > \aleph_0\), there exists an ordinal \(\gamma < \alpha\) such that \(f([\gamma, \alpha)) = \{a\}\).
  This implies that \(\beta\alpha = \alpha + 1\).
  Moreover, by \cref{ass: cf lambda lind lem: example lem}, for any subspace \(\alpha \subset X\subset \beta \alpha\), if \(L(X) < \cf(\alpha)\), then \(X = \beta\alpha\).
  Hence, by \cref{ass: cf lambda lind lem: example lem}, it holds that \(\cf(\alpha)\leq oL(\alpha) \leq L(\alpha) = \cf(\alpha)\).
  This completes the proof of \cref{ass: regular SC cpt lem: example lem}.

  Next, we prove \cref{ass: lambda 0 cpt lem: example lem}.
  Let \(\mcU\) be an open covering of \(A\).
  Then, there exists an element \(U\in \mcU\) such that \(\kappa\in U\).
  Since \(|\kappa \setminus U| < \kappa\), there exists a subset \(\mcV\subset \mcU\) such that \(|\mcV| < \kappa\), and \(A \setminus U \subset \bigcup \mcV\).
  Then, \(\mcV \cup \{U\}\subset \mcU\) is a subset such that \(A\subset \bigcup(\mcV\cup\{U\})\), and \(|\mcV\cup\{U\}| < \kappa\).
  This completes the proof of \cref{lem: example lem}.
\end{proof}

\begin{definition}\label{def: Xd discrete}
  \
  \begin{enumerate}
    \item
    For any set \(X\), we shall write \(X_d\) for the discrete topological space obtained by the set \(X\).
    \item
    For any topological spaces \(X\) and \(Y\), we shall write \(X\sqcup Y\) for the disjoint union of \(X\) and \(Y\).
  \end{enumerate}
\end{definition}

\begin{example}\label{exam: d cup ord}
  Let \(\kappa\) and \(\lambda\) be (infinite) non-measurable cardinals.
  Then, the following assertions hold:
  \begin{assertion}
    \item \label{ass: oL lt L exam: d cup ord}
    Assume that \(\aleph_0 < \cf(\kappa) = \kappa < \lambda\).
    Then, it holds that \(oL(\kappa\sqcup\lambda_d) = \kappa < L(\kappa\sqcup\lambda_d) = \lambda\).
    \item \label{ass: oL eq L exam: d cup ord}
    Assume that \(\kappa < \cf(\lambda) = \lambda\).
    Then, it holds that \(oL(\kappa_d\sqcup\lambda) = L(\kappa_d\sqcup\lambda) = \lambda\), and, moreover, for any \(\kappa < \mu\leq \lambda\), \(\kappa_d\sqcup\lambda\) is functorial \(\mu\)-Lindel\"ofifiable but not functorial \(\kappa\)-Lindel\"ofifiable.
  \end{assertion}
\end{example}

\begin{proof}
  Since \(\beta(\kappa\sqcup\lambda_d) = (\kappa + 1)\sqcup \beta\lambda_d\), \cref{ass: oL lt L exam: d cup ord} and the equality \(oL(\kappa_d\sqcup\lambda) = L(\kappa_d\sqcup\lambda) = \lambda\) follow immediately from \cref{lem: kappa Lind discrete} \ref{ass: X is nonmeas lem: kappa Lind discrete} and \cref{lem: example lem} \ref{ass: cf lambda lind lem: example lem} \ref{ass: regular SC cpt lem: example lem}.
  Moreover, for any \(\aleph_0 < \mu\leq \lambda\), it holds that \(\up_{\mu}(\kappa_d\sqcup\lambda) = \kappa_d\sqcup(\lambda + 1)\).
  Thus, the last assertion of \cref{ass: oL eq L exam: d cup ord} follows.
  This completes the each assertions in \cref{exam: d cup ord}.
\end{proof}

Next, we consider classes smaller than \(\Lind\).

\begin{example}
  Write \(\LCLind\subset \Top\) for the full subcategory determined by the locally compact Lindel\"of spaces.
  The space of rational numbers \(\Q\subset \R\) (whose topology is induced from \(\R\)) is \(\sigma\)-compact but not locally compact.
  Thus, \(\Q\) is not functorial \(\LCLind\)-ifiable.
\end{example}

Finally, we give an example of functorial Lindel\"ofifiable space that is not functorial \(\sigma\)-compactifiable.

\begin{example}
  Write \(A_0\subset \omega_1\) for the set of countable successor ordinals and \(A\dfn A_0\cup\{\omega_1\}\subset \omega_1 + 1\).
  Then, since \(A_0\) is discrete, any compact subset of \(A\) is finite.
  In particular, \(A\) is Lindel\"of (cf. \cref{lem: example lem} \ref{ass: lambda 0 cpt lem: example lem}) but not \(\sigma\)-compact.
  Write
  \[
    X\dfn (\omega_0 \times \omega_1)\cup (\{\omega_0\}\times A)
    \,\subset\,
    (\omega_0+1)\times(\omega_1+1).
  \]
  Let \(n < \omega_0\) be an integer and \(f:\{n\}\times \omega_1 \to \R\) a continuous function.
  Since \(\beta \omega_1 = \omega_1 + 1\), there exists a continuous extention \(f_1:\{n\}\times (\omega_1 + 1) \to \R\) of \(f\).
  By applying Tieze's extention theorem to \((\omega_0+1)\times (\omega_1+1)\), there exists a continuous extension \(\tilde{f}: X \to \R\) of \(f_1\).
  This implies that \(\omega_0 \times (\omega_1 + 1)\subset \up X\).
  Since \(\omega_0 \times (\omega_1 + 1)\) is Lindel\"of, and \(A\) is Lindel\"of, it holds that
  \[
    \up X = (\omega_0 \times (\omega_1 + 1))\cup (\{\omega_0\}\times A).
  \]
  Since \(A\) is not \(\sigma\)-compact, this implies that \(X\) is functorial Lindel\"ofifiable but not functorial \(\sigma\)-compactifiable.
  \qed
\end{example}

\vspace{0.3em}
\small

\end{document}